\newtheorem{theorem}{Theorem}
\newtheorem{corollary}[theorem]{Corollary}
\newtheorem{lemma}[theorem]{Lemma}
\newtheorem{defn}[theorem]{Definition}
\title[Linking number of monotonic cycles in random book embeddings]{Linking number of monotonic cycles in random book embeddings of
complete graphs}
\author[Aguillon]{Yasmin Aguillon}
\address{Department of Mathematics,
University of Notre Dame,
255 Hurley Bldg,
Notre Dame, IN 46556 USA}
\email{yaguillo@nd.edu}
\author[Burkholder]{Eric Burkholder}
\address{Department of Mathematics,
University of Kentucky,
719 Patterson Office Tower,
Lexington, KY 40506 USA}
\email{ebu241@uky.edu}
\author[Cheng]{Xingyu Cheng}
\address{Department of Mathematics,
University of North Carolina at Chapel Hill,
120 E Cameron Avenue,
Chapel Hill, NC 27599 USA}
\email{xcheng1@unc.edu}
\author[Eddins]{Spencer Eddins}
\address{University of Kentucky,
Lexington, KY 40506}
\email{spencer.eddins@uky.edu}
\author[Harrell]{Emma Harrell}
\address{Mount Holyoke College,
50 College Street,
South Hadley, MA 01075 USA}
\email{harre22e@mtholyoke.edu}
\author[Kozai]{Kenji Kozai}
\address{Department of Natural Sciences and Mathematics,
Lesley University,
29 Everett Street,
Cambridge, MA 02138 USA}
\email{kkozai@lesley.edu}
\author[Leake]{Elijah Leake}
\address{DePaul University,
1 E. Jackson Boulevard,
Chicago, IL 60604 USA}
\email{ejohnj247@gmail.com}
\author[Morales]{Pedro Morales}
\address{Department of Mathematics,
Purdue University,
150 N. University Street,
West Lafayette, IN 47907 USA}
\email{moralep@purdue.edu}
\subjclass{57M15,  57K10, 05C10}
\keywords{book embeddings of graphs, linking in spatial graphs, Eulerian
numbers}
\thanks{The authors were supported in part by NSF Grant DMS-1852132.}
\begin{document}

\begin{abstract}
A book embedding of a complete graph is a spatial embedding
whose planar projection has the vertices located along a circle, consecutive
vertices are connected by arcs of the circle, and the projections of the
remaining ``interior'' edges in the graph are straight line segments between
the points on the circle representing the appropriate vertices. A random
embedding of  a complete graph can be generated by randomly assigning relative
heights to these interior edges. We study a family of two-component links that
arise as the realizations of pairs of disjoint cycles in these random embeddings
of graphs. In particular, we show that the distribution of linking numbers can
be described in terms of Eulerian numbers. Consequently, the mean of the squared
linking number over all random embeddings is $\frac{i}{6}$, where $i$ is the
number of interior edges in the cycles. We also show that the mean of the
squared linking number over all pairs of $n$-cycles in $K_{2n}$ grows linearly
in $n$.
\end{abstract}

\maketitle

\section{Introduction}

Random knot models have been used to study the spatial configurations of
polymers such as DNA, whose length is 1,000 to 500,000 times the length of the
diameter of the nucleus \cite{Flapan}. With such a long molecule confined to
a compact space, DNA can become knottted, tangled, or
linked. In order for cell replication to occur, DNA must unknot
itself with the aid of a special enzyme known as topoisomarase that cuts through
the knotted parts of the DNA molecule and reconnects any loose ends, and
problems can arise during cellular replication if topoisomarase enzymes do not work properly \cite{Beals}. By comparing the topological invariants of DNA
before and after enzymes act on it, we can learn more about
mechanisms of these enzymes and their effects on the structure of DNA
\cite{Mishra}. Because many polymers are too small to image in detail,
several authors have used mathematical models to study configurations of
long polymer chains by introducing versions of uniform random distributions
of polygonal chains in a cube
\cite{Arsuaga09,Arsuaga,Diao,Diao93,panagiotou10,Portillo,Numerical}.
Even-Zohar, et al. introduced a random model based on petal diagrams of knots
and links where the distribution of links can be studied in terms of random
permutations, achieving an explicit description of the asymptotic distribution
for the linking number \cite{Even_Zohar_2016}.

Random graph embeddings can be thought of as generalizations of random
knot embeddings to molecules with non-linear structures. In \cite{flapan16},
a random graph embedding model generalizing the uniform random distributions
of polygonal chains in a cube was used study the behavior of linking numbers
and writhe. In this paper, we study an alternate random embedding model
similar to the Petaluma model in \cite{Even_Zohar_2016} in that the distribution
of random embeddings can be
described in terms of a random choice of permutations. This model is based
on book embeddings of the complete graph $K_n$. Rowland has classified all
possible links that could appear in book embeddings of $K_6$ \cite{Rowland},
and we consider the more general case of links in $K_{2n}$. In particular, we
study a special class of two-component links that appear in book embedding
which are unions of disjoint monotonic cycles, and we describe the behavior of
the linking number in terms of the combinatorial properties of the length of
the cycles and the number of interior edges in the book embedding. We show that
the mean value of the squared linking number grows linearly with respect to both
quantitites in Theorem \ref{thm:meansquaredlinking} and Theorem
\ref{thm:totallinking}.

\section{Random book embeddings}

Given a graph $G$, Atneosen \cite{atneosen72} and Persinger \cite{persinger66} 
introduced the notion of a \textit{book embedding} of $G$, which is a
particular class of spatial embedding of a graph  in which the vertices of the
graph are placed along a fixed line in $\mathbb{R}^3$ called the
\textit{spine} of the book. The edges of $G$ are embedded on half-planes,
called \textit{sheets}, which are bounded by the spine.  Classically, the
edges are drawn as disjoint circular arcs on their respective sheets. Instead,
we will consider the \textit{circular diagram} for a book embedding of $K_n$
introduced by Endo and Otsuki in which the spine is a circle consisting of the
vertices and edges between consecutive
vertices, the pages are discs bounded by the spine, and the remaining edges are
straight lines between vertices of a given page  \cite{endo94, endo96}. 

We focus on book embeddings of the complete graph $K_{2n}$ (or sometimes
$K_{m+n}$) on  $2n$ verticles. In our model, the $2n$ vertices will be labeled
as $v_1, \dots, v_{2n}$ in clockwise order around the circular spine. The
perimeter of the circle will form the edges between consecutive
vertices $v_j$ and $v_{j+1}$ for all $j \in \{ 1, 2, \cdots , 2n \}$, where
the indices are taken modulo $2n$. We denote these edges as
\textit{exterior edges}. The remaining $\binom{2n}{2} -2n$ edges are
\textit{interior edges}, and a book embedding is determined by dividing the
interior edges among a finite number of sheets so that no two edges within a
page intersect.

In order to generate a \textit{random book embedding}, we embed each interior
edge on its own separate sheet. The ordering of sheets can then be determined
by a random permutation $\sigma$ of $\{1,\dots,\binom{2n}{2}-2n\}$ with the
uniform distribution. We can think
of the permutation as giving the height order of the sheets, so that edge $e_i$
is in a sheet above edge $e_j$ if $\sigma(i)>\sigma(j)$. Note that a random
book embedding will typically be equivalent to a book embedding with far
fewer sheets. When edges in two adjacent sheets do not cross in a circular
diagram, the two sheets can be combined to a single sheet in which
the two edges are embedded without intersecting, obtaining an equivalent
embedding with one fewer sheet.

\section{Preliminary definitions}

The image of two disjoint cycles in a graph $G$ under an embedding forms a
two-component link. We can compute the linking number of any oriented link $L$
in $\mathbb{R}^3$ by considering the signed crossings of the two components in
a planar projection with the rule indicated in Figure
\ref{fig:positive-and-negative-crossings}. We will denote half of the sum of
the signed crossings as the linking number $\ell(L)$ of a link $L$.
This gives a quantitative measure of how interwined the two components are. In
an abuse of notation, given two oriented cycles $P$ and $Q$ of a graph $G$
and a fixed embedding, we will let $\ell(P \cup Q)$ mean the linking number
of the image of the two cycles under the embedding.

\begin{figure}
\centering
\begin{tikzpicture}
\draw[black, thick, -stealth] (-1,-1) -- (1,1);
\draw[black, thick] (1,-1) -- (0.1,-0.1);
\draw[black, thick,-stealth] (-0.1,0.1) -- (-1,1);
\end{tikzpicture}
\hspace{25mm}
\begin{tikzpicture}
\draw[black, thick,] (-1,-1) -- (-0.1,-0.1);
\draw[black, thick,-stealth] (1,-1) -- (-1,1);
\draw[black, thick,-stealth] (0.1,0.1) -- (1,1);
\end{tikzpicture}
\caption{A positive crossing (left) and a negative crossing (right)}
\label{fig:positive-and-negative-crossings}
\end{figure}
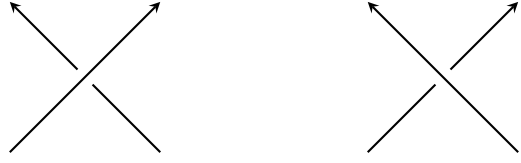

We introduce a special class of links in book embeddings of a graph. 

\begin{defn}
	Let $K_{2n}$ be a complete graph with vertices enumerated as
	$\{v_1,\dots,v_{2n}\}$ in cyclic order along the spine of a book
	embedding of $K_{2n}$. An oriented cycle with consecutive edges
	$\{\overrightarrow{v_{i_1}v_{i_2}}, \overrightarrow{v_{i_2}v_{i_3}},
	\dots, \overrightarrow{v_{i_{k-1}}v_{i_k}}, \overrightarrow{v_{i_k}v_{i_1}}
	\}$ is 

\begin{enumerate}
\item \textit{strictly increasing} if there is a cyclic permutation
$i_1',\dots,i_k'$ of
$i_1,\dots,i_k$ such that $i_j' < i_{j+1}'$ for all $j \in \{ 1,2,\dots,
k-1 \}$.
\item \textit{strictly decreasing} if there is a cyclic permutation
$i_1',\dots,i_k'$ of
$i_1,\dots,i_k$ such that $i_j' > i_{j+1}'$ for all $j \in \{ 1,2,\dots,
k-1 \}$.
\item \textit{monotonic} if the cycle is either strictly increasing or
strictly decreasing.
\end{enumerate}
\end{defn}

The 4-cycle on the left in Figure \ref{fig:monotonicdefinition} is
monotonic because beginning with the vertex $v_1$, the vertices in the cycle
in order are $v_1, v_2, v_3, v_4$, which has strictly increasing indices.
However, the order of the vertices in the 4-cycle on the right is
$v_1,v_3,v_2,v_4$. The indices are not monotonic even up to cyclic permutation,
so this cycle is not monotonic.

\begin{figure} \centering \begin{tikzpicture} \draw[fill=black] (1,1) circle
(2.25pt); \draw[fill=black] (-1,1) circle (2.25pt); \draw[fill=black] (1,-1)
circle (2.25pt); \draw[fill=black] (-1,-1) circle (2.25pt);
    
	\draw[thick] (1,1) -- (1,-1); \draw[thick] (1,-1) -- (-1,-1); \draw[thick,]
	(-1,-1) -- (-1,1); \draw[thick] (-1,1) -- (1,1);
    
	\node[] at (0,1) ($>$){$>$};
    
	\node[] at (1.3,1.3) (1){$v_1$};
	\node[] at (1.3,-1.3) (2){$v_2$};
	\node[] at (-1.3,-1.3) (3){$v_3$};
	\node[] at (-1.3,1.3) (4){$v_4$};
    
	\node[] at (0,-1.75) (1234){$v_1v_2v_3v_4$};
	\node[] at (0,-2.2) {monotonic};
	\draw[fill=black] (6,1) circle (2.25pt); \draw[fill=black] (4,1) circle
	(2.25pt); \draw[fill=black] (6,-1) circle (2.25pt); \draw[fill=black]
	(4,-1) circle (2.25pt);
    
	\draw[thick] (6,1) -- (4,-1); \draw[thick] (6,-1) -- (4,1); \draw[thick,]
	(4,-1) -- (6,-1); \draw[thick] (4,1) -- (6,1);
    
	\node[] at (5,1) ($>$){$>$};
    
	\node[] at (6.3,1.3) (1){$v_1$}; \node[] at (6.3,-1.3) (2){$v_2$};
	\node[] at (3.7,-1.3) (3){$v_3$}; \node[] at (3.7,1.3) (4){$v_4$};
    
	\node[] at (5,-1.75) (1324){$v_1v_3v_2v_4$}; \node[] at (5,-2.2) {non-monotonic};
	\end{tikzpicture} \caption{Monotonic (left) and non-monotonic (right)
	cycles} \label{fig:monotonicdefinition} \end{figure}
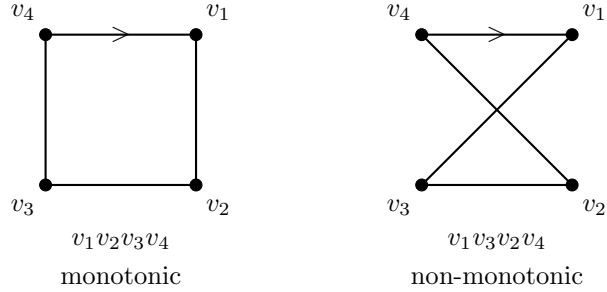

Finally, we also introduce the Eulerian numbers, which arise in combinatorics
as coefficients of Eulerian polynomials \cite{comtet,euler00,a008292}.

\begin{defn}
	Let $\sigma \in S_n$ be a permutation on $\{1,\dots,n\}$. An
	\textit{ascent} of the permutation is a value $1\leq k \leq n-1$ such
	that $\sigma(k) < \sigma(k+1)$.
\end{defn}

\begin{defn}
	The \textit{Eulerian number} $A(n,m)$ is the number of permutations
	$\sigma \in S_n$ that have exactly $m$ ascents.
\end{defn}

As an example, we have the following exhaustive list of permutations in
$S_3$:

\begin{center}
	(1,2,3);
	(1,3,2);
	(2,1,3);
	(2,3,1);
	(3,1,2);
	(3,2,1).
\end{center}

Among these permutations, (1,2,3) has two ascents, (1,3,2), (2,1,3), (2,3,1),
and (3,1,2) each have one ascent, and (3,2,1) has no ascents. Hence,
$A(3,2)=1$, $A(3,1) = 4$, and $A(3,0) = 1$. Note that $A(n,n)=0$ for all
$n>0$. Additionally, there is always exactly one permutation in $S_n$ with no
ascents and exactly one permutation in $S_n$ with $n-1$ descents, which are
($n$,$n-1$,\dots,1) and (1,2,\dots,$n$), respectively. Hence,
$A(n,0)=A(n,n-1)=1$.

Eulerian numbers are coefficients of Eulerian polynomials,
\begin{equation*}
	A_n(t) = \sum_{m=0}^n A(n,m)t^m,
\end{equation*}
where $A_n(t)$ is recursively defined by the relations,
\begin{align*}
	A_0(t)&=1,\\
	A_n(t)&=t(1-t)A'_{n-1}(t)+A_{n-1}(t)(1+(n-1)t), &\text{ for }n >0.
\end{align*}
It is also known that
\begin{equation*}
	A(n,m) = \sum_{k=0}^{m+1}(-1)^k \binom{n+1}{k}(m+1-k)^n,
\end{equation*}
and the exponential generating function for the Eulerian numbers is
\begin{equation*}
	\sum_{n=0}^\infty \sum_{m=0}^\infty A(n,m) t^m \frac{x^n}{n!} =
		\frac{t-1}{t-e^{(t-1)x}}.
\end{equation*}
	
From the definition, it is also evident that for a fixed $n$, the sum of
Eulerian numbers $A(n,m)$ over all possible values of $m$ gives the number
of all permutations, $|S_n|$, so that
\begin{equation*}
	\sum_{m=0}^n A(n,m) = n!.
\end{equation*}

\section{Linking numbers of disjoint monotonic cycles}

In this paper, we will consider the distribution of linking numbers of two
disjoint monotonic cycles in random book embeddings.
First, note the following fact about the number of interior edges of two
monotonic cycles in a book embedding.

\begin{lemma}\label{lem:interior-edges}
Two disjoint monotonic cycles of length
$m$ and $n$ in a book embedding of $K_{m+n}$ must have an equal number of
interior edges, which is also equal to half the number of crossings between the
two cycles.
\end{lemma}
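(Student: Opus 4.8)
The plan is to reduce everything to the combinatorics of the cyclic $2$-coloring of the spine induced by the partition of $\{v_1,\dots,v_{m+n}\}$ into the vertex set $A$ of the $m$-cycle $P$ and the vertex set $B$ of the $n$-cycle $Q$; since the cycles are vertex-disjoint and their lengths sum to $m+n$, every spine vertex lies on exactly one of them. Reading the colors clockwise around the spine, group the vertices into maximal monochromatic \emph{blocks}. Blocks of the two colors alternate around the circle, so there is a common number $t$ of $A$-blocks and of $B$-blocks.

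First I would prove the equality of the numbers of interior edges. Because $P$ is monotonic, after a cyclic relabeling its edges join consecutive vertices in the increasing order $a_1 < \dots < a_m$ of $A$, together with the wrap-around edge $\overline{v_{a_m} v_{a_1}}$. Such an edge $\overline{v_{a_j} v_{a_{j+1}}}$ (indices taken mod $m$) is an exterior edge of $K_{m+n}$ exactly when $v_{a_j}$ and $v_{a_{j+1}}$ are adjacent on the spine, i.e. when no vertex of $B$ lies on the short arc between them; equivalently, the interior edges of $P$ are precisely those consecutive pairs of $A$-vertices with a $B$-block squeezed between them. Each $B$-block sits between exactly one such consecutive pair, so $P$ has exactly $t$ interior edges, while the within-block edges of $P$ account for the remaining $m-t$ exterior edges. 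The identical argument with the roles of $A$ and $B$ reversed shows $Q$ also has $t$ interior edges.

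Next I would count the crossings in the circular diagram. An exterior edge of either cycle is an arc of the spine joining two consecutive vertices, and since $A$ and $B$ are disjoint such an arc contains no vertex of the other cycle in its interior and shares no endpoint with it; hence it is crossed by nothing, so every crossing is between an interior edge of $P$ and an interior edge of $Q$. Each interior edge of $P$ is a chord whose endpoints are the last vertex of some $A$-block and the first vertex of the next $A$-block, so its endpoints split the circle into one arc containing precisely one $B$-block and one arc containing everything else; symmetrically for the interior edges of $Q$. It is cleanest to first collapse each block to a single point — this does not disturb any $P$-edge/$Q$-edge crossing, since an interior edge of $P$ never has both endpoints in the same $A$-block — which reduces the count to the fully alternating configuration on $2t$ super-vertices $\bar A_1, \bar B_1, \dots, \bar A_t, \bar B_t$. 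There one checks directly that the interior edge of $P$ spanning $\bar B_s$ crosses exactly the two interior edges of $Q$ that span $\bar A_s$ and $\bar A_{s+1}$, giving $2t$ crossings in total; hence half the number of crossings equals $t$, matching the interior-edge count.

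The main obstacle is the cyclic bookkeeping rather than any deep idea: one must treat the wrap-around edge of a monotonic cycle on the same footing as the others, interpret ``adjacent on the spine'' cyclically, verify that collapsing blocks preserves all crossings between the two cycles, and check the degenerate one-block configurations by hand. Once the block picture is set up correctly, both assertions follow from the single observation that the interior edges of $P$ are in bijection with the $B$-blocks, and the interior edges of $Q$ with the $A$-blocks.
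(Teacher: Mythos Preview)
Your argument is correct and takes a genuinely different route from the paper's. The paper argues locally: for each interior edge $\overrightarrow{v_jv_k}$ of $P$ it exhibits the two edges of $Q$ that cross it (namely the $Q$-edge leaving $v_{k-1}$ and the $Q$-edge arriving at $v_{j+1}$), and then obtains the equality of interior-edge counts only as a consequence of the crossing count by symmetry. You instead set up the global block decomposition of the spine into alternating $A$- and $B$-runs and read off both conclusions from the single number $t$ of blocks of each color. Your route makes the equality of interior-edge counts immediate and conceptually transparent (interior edges of $P$ are in bijection with $B$-blocks, and vice versa), at the cost of the extra collapsing step for the crossing count; the paper's route gets the crossing count in one line but leaves the equality of interior edges as an afterthought. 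Either way the core observation is the same: an interior edge of one cycle is precisely a chord that hops over a single block of the other color, and it is crossed by exactly the two chords of the other cycle that hop over the adjacent blocks.

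One small caveat worth making explicit: in the one-block case $t=1$ your final crossing count of $2t$ is not right --- the unique interior chord of $P$ and the unique interior chord of $Q$ do \emph{not} cross (their endpoints do not alternate), so there are $0$ crossings, not $2$. You flag ``degenerate one-block configurations'' as needing a by-hand check, but if you carry it out you will find that the second clause of the lemma actually fails there (both cycles have one interior edge, yet there are no crossings). The paper's own proof has the same gap in this boundary case; the applications in the paper only use $i\ge 2$, so nothing downstream is affected, but you should state the restriction $t\ge 2$ explicitly rather than leave it as an unchecked aside.
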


\begin{proof}
Let $P$ and $Q$ be an $m$-cycle and $n$-cycle in a book embedding,
respectively, and suppose that $P$ has $i$ interior edges. Let
$\overrightarrow{v_jv_k}$ be an interior edge of $P$. Then $v_{k-1}$ must be a
vertex in $Q$, and there is a smallest $h>k$ such that $v_h$ is a
vertex in $Q$. Then $\overrightarrow{v_{k-1}v_h}$ is an edge in $Q$ which
crosses the edge $\overrightarrow{v_jv_k}$ of $P$. Similarly, there is an edge
$\overrightarrow{v_sv_{j+1}}$ in $Q$ that crosses $\overrightarrow{v_jv_k}$,
and no other edge in $Q$ can cross $\overrightarrow{v_jv_k}$. Hence, the number
of crossings between $P$ and $Q$ is twice the number of interior edges in $P$.
By symmetry, this is also equal to twice the number of interior edges in $Q$.
\end{proof}

Lemma \ref{lem:interior-edges} implies that if $P$ and $Q$ are both
$n$-cycles and $P$ consists of $n$ interior edges, then all edges in $Q$ must
also be interior.
We now relate the number of disjoint cycles with fixed linking number to the
Eulerian numbers $A(m,n)$.

\begin{theorem}\label{thm:eulerstriangledistribution}
Suppose $P$ and $Q$ are both strictly increasing $n$-cycles in $K_{2n}$ so
that $P$ and $Q$ both consist of $n$ interior edges. The proportion of random
book embeddings of $K_{2n}$ for which $P$ and $Q$ have linking number 
equal to $\ell$ is
\begin{equation*}
	\frac{A(2n-1,n+\ell-1)}{(2n-1)!}.
\end{equation*}
\end{theorem}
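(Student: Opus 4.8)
The plan is to reduce the problem to a statistic of a uniformly random permutation and then to recognize that statistic as a cyclic ascent count, which is governed by Eulerian numbers.

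First I would pin down the combinatorial configuration. Since $P$ and $Q$ are disjoint $n$-cycles in $K_{2n}$ they together exhaust all $2n$ vertices, and by Lemma~\ref{lem:interior-edges} every edge of each cycle is interior. Writing $P$ as the strictly increasing cycle through $v_{i_1},\dots,v_{i_n}$ with $i_1<\cdots<i_n$, the requirement that no edge (including the wrap edge $\overrightarrow{v_{i_n}v_{i_1}}$) be exterior forces each gap $i_{j+1}-i_j$ to be at least $2$ and rules out $i_1=1,\ i_n=2n$; since also $i_n-i_1\le 2n-1$, all gaps must equal $2$. Hence, up to relabeling, $P$ is the cycle $v_1\to v_3\to\cdots\to v_{2n-1}\to v_1$ and $Q$ is the cycle $v_2\to v_4\to\cdots\to v_{2n}\to v_2$. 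I would then set $p_j=\overrightarrow{v_{2j-1}v_{2j+1}}$ and $q_j=\overrightarrow{v_{2j}v_{2j+2}}$, with edge indices read mod $n$ and vertex indices mod $2n$.

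Next I would analyze the crossings. By the argument in the proof of Lemma~\ref{lem:interior-edges}, $p_j$ crosses exactly the two $Q$-edges $q_{j-1}$ and $q_j$, so $P\cup Q$ has exactly $2n$ crossings in the circular diagram. For each crossing a direct orientation check against Figure~\ref{fig:positive-and-negative-crossings} (keeping track of the clockwise labeling of the spine and of both chord orientations) should show that the crossing of $p_j$ with $q_j$ is positive exactly when $\sigma(p_j)<\sigma(q_j)$, while the crossing of $p_{j+1}$ with $q_j$ is positive exactly when $\sigma(q_j)<\sigma(p_{j+1})$. Arranging the heights in the cyclic word
\begin{equation*}
\bigl(\sigma(p_1),\,\sigma(q_1),\,\sigma(p_2),\,\sigma(q_2),\,\dots,\,\sigma(p_n),\,\sigma(q_n)\bigr),
\end{equation*}
this says the $2n$ crossings correspond bijectively to the $2n$ cyclically consecutive pairs of the word, a crossing being positive precisely when its pair is an ascent. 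Consequently, if $a$ is the number of cyclic ascents of this word, then $\ell(P\cup Q)=\frac12\bigl(a-(2n-a)\bigr)=a-n$. Since $\sigma$ is uniform, its restriction to these $2n$ edges induces the uniform distribution on the $(2n)!$ orderings of them, so $\ell(P\cup Q)$ is distributed as $c(\pi)-n$, where $c(\pi)$ is the number of cyclic ascents of a uniformly random $\pi\in S_{2n}$.

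Finally I would invoke the classical identity that the number of $\pi\in S_N$ with exactly $c$ cyclic ascents equals $N\cdot A(N-1,c-1)$: rotating the cyclic word of $\pi$ so that its maximum $N$ is last and deleting $N$ gives an $N$-to-one surjection onto $S_{N-1}$ that destroys exactly one ascent (the one terminating at $N$) and no descent, so $c$ equals one plus the number of ascents of the image $w\in S_{N-1}$. Taking $N=2n$ and $c=n+\ell$ then yields that the proportion of embeddings with linking number $\ell$ is $2n\,A(2n-1,\,n+\ell-1)/(2n)!=A(2n-1,\,n+\ell-1)/(2n-1)!$, as claimed. The main obstacle is the crossing-sign computation in the middle step: one must correctly pair each of the $2n$ crossings with a cyclically adjacent pair of heights and verify the sign rule, so that the distribution genuinely becomes the cyclic ascent distribution (a swapped sign convention would still give a symmetric-looking answer, which by the standard symmetry $A(N,m)=A(N,N-1-m)$ is in fact the same, but one should be sure). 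Everything after this step is either standard (the Eulerian/cyclic-ascent identity) or formal (uniformity of $\sigma$), and everything before it is the elementary gap argument fixing the configuration.
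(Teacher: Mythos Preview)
Your proposal is correct and follows essentially the same route as the paper. Both arguments arrange the $2n$ interior edges cyclically so that consecutive edges cross, identify the crossing sign with an ascent/descent of the height permutation, and then reduce from a cyclic word of length $2n$ to a linear word of length $2n-1$ by singling out the maximum; the paper does this reduction in-line by relabeling so that the topmost edge is $e_{2n}$, while you package the same maneuver as the classical ``cyclic ascents in $S_N$ equal $N\cdot A(N-1,c-1)$'' identity. Your explicit gap argument forcing $P$ and $Q$ to alternate on the spine is a nice addition that the paper leaves implicit.
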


\begin{proof}
Let $P$ and $Q$ be two strictly increasing cycles, each with $n$
interior edges. Consider a permutation of all of the interior edges of
$K_{2n}$, which determines the ordering of their respective sheets in a
book embedding. As we are only concerned with the linking number
$\ell(P \cup Q)$, we only need the relative orderings of the
edges of $P$ and $Q$ in order to resolve the signs of any crossings between
interior edges of $P$ and $Q$. By designating these edges as $e_1,\dots,
e_{2n}$, we may consider the permutation $\sigma$ as a permutation of
$\{1,\dots,2n\}$.

Without loss of generality, we label the topmost edge of the permutation of
interior edges as edge $e_{2n}$. Since the edges in the cycle are directed so
that the cycle is strictly increasing, we may begin numbering the vertices of
$K_{2n}$ so that the initial vertex of $e_{2n}$ is vertex $v_{2n}$. We then
number the vertices in cyclic order, so that the vertex in $K_{2n}$ that lies
next in the clockwise direction from $v_{2n}$ is $v_1$, the following vertex
(which is the terminal vertex of $e_{2n}$) is $v_2$, and so on. The edge
indices will then also be identified with their initial vertex, so that
the edge $\overrightarrow{v_1v_3}$ is $e_1$, the edge $\overrightarrow{v_2v_4}$
is $e_2$, and so on, until the edge $\overrightarrow{v_{2n-1}v_1}$ is
labeled $e_{2n-1}$ and edge $\overrightarrow{v_{2n}v_2}$ is labeled $e_{2n}$.
Under this labeled scheme, edge $e_j$ will have crossings with edges $e_{j-1}$
and $e_{j+1}$, where indices are taken modulo $2n$.

The bijective function $\sigma$ from $\{1,\dots,2n\}$ to itself determines the
relative heights of the edges so that
whenever $\sigma(j) > \sigma(k)$, then $e_j$ is in a sheet above the sheet
containing $e_k$, and whenever $\sigma(j) < \sigma(k)$, $e_j$ is embedded in a
sheet below the sheet containing $e_k$. Since both cycles are strictly
increasing, the sign of the crossing between edge $e_j$ and edge $e_{j+1}$ can
be determined by $\sigma(j)$ and $\sigma(j+1)$. When $\sigma(j)>\sigma(j+1)$,
the sign of the crossing is negative. When $\sigma(j)<\sigma(j+1)$, the sign of
the crossing is positive, as seen in Figure \ref{fig:pi(j)pi(j+1)crossings}.
Therefore, the linking number is half the quantity of the number of times
$\sigma(j) < \sigma (j+1)$ minus the number of times $\sigma(j) > \sigma(j+1)$.

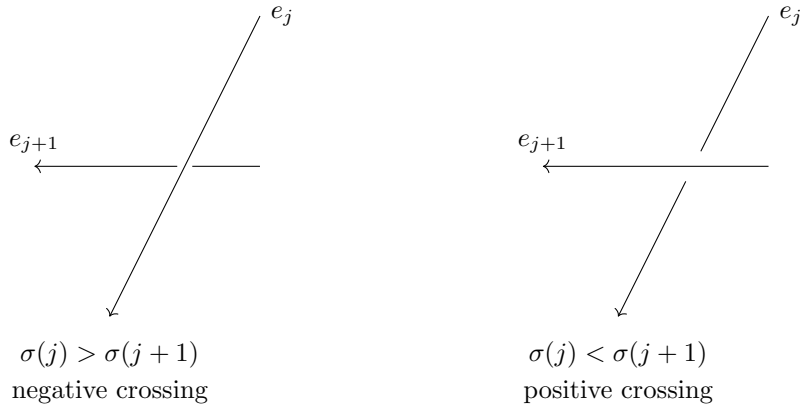
\begin{figure}
\centering
\begin{tikzpicture}
\draw[] (2,0) -- (1.1,0);
\draw[->] (.9,0) -- (-1,0);
\draw[->] (2,2) -- (0,-2);
\node[] at (-1,.3){$e_{j+1}$};
\node[] at (2.3,2){$e_j$};
\node[] at (0,-2.5){$\sigma(j)>\sigma(j+1)$};
\node[] at (0,-3){negative crossing};
\end{tikzpicture}
\hspace{25mm}
\begin{tikzpicture}
\draw[->] (2,0) -- (-1,0);
\draw[] (2,2) -- (1.1,.2);
\draw[->] (.9,-.2) -- (0,-2);
\node[] at (-1,.3){$e_{j+1}$};
\node[] at (2.3,2){$e_j$};
\node[] at (0,-2.5) {$\sigma(j)<\sigma(j+1)$};
\node[] at (0,-3){positive crossing};
\end{tikzpicture}
\caption{A negative crossing (left) and a positive crossing (right) in terms
of $\sigma(j)$ and $\sigma(j+1)$}
\label{fig:pi(j)pi(j+1)crossings}
\end{figure}

By construction, $\sigma(2n)=2n$, so that $\sigma(2n-1) < \sigma(2n)$ and
$\sigma(2n)> \sigma(1)$. Since this results in exactly one positive crossing and
one negative crossing, crossings involving the edge $e_{2n}$ have zero net
effect on the linking number. We may ignore edge $2n$ in the permutation and
consider only a further restriction of the permutation to a permutation
$\sigma^\prime$ of $\{1,\dots,2n-1\}$. Topologically,
this can be thought of as applying a Reidemeister Move 2, sliding the topmost
edge away to the exterior of the binding so that the edge $e_{2n}$ no longer
has any crossings with edges $e_{2n-1}$ and $e_1$

Notice that $\sigma^{\prime} (j) < \sigma^{\prime}(j+1)$ is the same as an
ascent in $\sigma^{\prime}$ and $\sigma^{\prime} (j) > \sigma^{\prime} (j+1)$
is the same as a descent in $\sigma^{\prime}$. So the linking number of $P$
and $Q$ depends on the number of ascents of the permutation $\sigma^{\prime}$.
If $\sigma^\prime$ has $m$ ascents, it has $2n-2-m$ descents,
so that the linking number is $\frac{1}{2}[m-(2n-2-m)]$. Setting this equal
to $\ell$, then $m=n+\ell-1$. Thus, we conclude that the number of
permutations in $S_{2n-1}$ that lead to a linking number of $\ell$ is
$A(2n-1,n+\ell-1)$. For each permutation $\sigma^\prime \in
S_{2n-1}$, there are an equal number of permutations of the edges of
$K_{2n}$ that restrict to $ \sigma^\prime$, so that the proportion of random
book embeddings in which $P$ and $Q$ have linking number $\ell$ is
\begin{equation*}
	\frac{A(2n-1,n+\ell-1)}{(2n-1)!}.
\end{equation*}

\end{proof}

\begin{figure} \centering
\begin{tikzpicture}
	\draw[fill=black] (0,0) circle (2pt);
	\draw[fill=black] (-3.5,2) circle (2pt);
	\draw[fill=black] (3.5,2) circle (2pt);
	\draw[fill=black] (-3.5,4) circle (2pt);
	\draw[fill=black] (3.5,4) circle (2pt);
	\draw[fill=black] (0,6) circle (2pt);

	\node[] at (0,-0.25){$v_4$};
	\node[] at (3.75,2){$v_3$};
	\node[] at (3.75,4){$v_2$};
	\node[] at (0,6.25){$v_1$};
	\node[] at (-3.75,4){$v_6$};
	\node[] at (-3.75,2){$v_5$};

	\draw (1.8375, 3.9)--(3.5,2);
	\draw (3.5,4)--(2.7125,3.1);
	\draw (3.5,2)--(1.85,2);
	\draw (-1.8375,2.1)--(-3.5,4);
	\draw (-3.5,2)--(-2.7125,2.9);
	\draw (-2.5375,3.1)--(-1.8375,3.9);

	\begin{scope}[decoration={
        markings,
        mark=at position 0.5 with {\arrow{Latex[length=3mm, width=3mm]}}}
        ] 
        \draw[postaction={decorate}] (-3.5,4)--(3.5,4);
        \draw[postaction={decorate}] (0,6)--(1.6625,4.1);
        \draw[postaction={decorate}] (-0,0)--(-1.6625, 1.9);
        \draw[postaction={decorate}] (-1.6625,4.1)--(-0,6);
    \end{scope}
    
    \begin{scope}[decoration={
        markings,
        mark=at position 0.7 with {\arrow{Latex[length=3mm, width=3mm]}}}
        ] 
        \draw[postaction={decorate}] (2.5375,2.9)--(0,0);
    \end{scope}
    
    \begin{scope}[decoration={
        markings,
        mark=at position 0.43 with {\arrow{Latex[length=3mm, width=3mm]}}}
        ] 
        \draw[postaction={decorate}] (1.6,2)--(-3.5,2);
    \end{scope}
    
    \node[] at (0,4.4){$\mathbf{e_6}$};
    \node[] at (1.6,5.2){$\mathbf{e_1}$};
    \node[] at (1.6,0.8){$\mathbf{e_2}$};
    \node[] at (0,2.4){$\mathbf{e_3}$};
    \node[] at (-1.6,0.8){$\mathbf{e_4}$};
    \node[] at (-1.6,5.2){$\mathbf{e_5}$};
	
\end{tikzpicture}
\caption{Solomon's link as a union of two monotonic $3$-cycles in
$K_6$.}
\label{fig:ascentlinkingnumberexample} \end{figure}
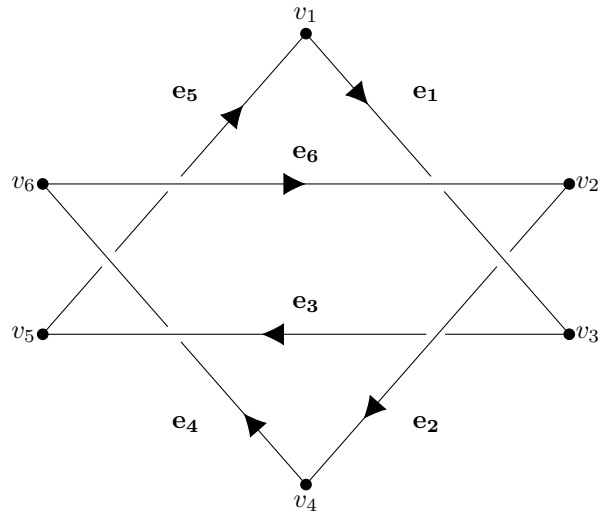

\begin{table} \centering \begin{tabular}{c|c|c|c} $j$ & $\sigma(j)$ & crossing of $e_j$ and $e_{j+1}$ & ascent or descent \\
\hline 1 & 5 & $-$ & descent\\
2 & 4 & $-$ & descent\\
3 & 3 & $-$ & descent\\
4 & 2 & $-$ & descent\\
5 & 1 & $+$ & ascent\\
6 & 6 & $-$ & \\
\end{tabular}
\caption{Signed crossings and ascents/descents in height function $\sigma$
for the example in Figure \ref{fig:ascentlinkingnumberexample}.}
\label{tab:euleriantrianglek6}
\end{table}

An example of the connection between ascents, descents, crossing signs, and
linking number is shown in Figure \ref{fig:ascentlinkingnumberexample} and
Table \ref{tab:euleriantrianglek6}. Observe
in Table \ref{tab:euleriantrianglek6} that $\sigma(5)<\sigma(6)$. Thus
$j=5$ would be an ascent. However, as $\sigma(6)>\sigma(1)$, the signed crossing
between $e_5$ and $e_6$ is canceled out with the signed crossing between
$e_6$ and $e_1$. Considering only $j=1$ ,$2$, $3$, $4$ we are left with 
four descents, which lead to four negative crossings and a linking number of
$-2$.

We remark that the results from Theorem \ref{thm:eulerstriangledistribution}
extend to the more general case of two monotonic cycles of length $m$ and $n$
with $i$ interior edges each. The sign of the linking number will flip whenever
we reverse the orientation of one of the cycles, so if we have two monotonic
cycles $P$ and $Q$ of length $n$ which are not necessarily strictly increasing,
this would result in replacing $\ell$ with $-\ell$ in the result of Theorem
\ref{thm:eulerstriangledistribution}. However, the Eulerian numbers have the
symmetry property that $A(n,m) = A(n,n-1-m)$, so that $A(2n-1,n-\ell-1)=
A(2n-1,n+\ell-1)$. This results in an identical proportion of book embeddings
in which the cycles have linking number $\ell$, thus whether the cycles are
strictly increasing or strictly decreasing has no net effect on the
distribution of linking numbers as long as they are both monotonic.

In the case where $P$ and $Q$ have lengths $m$ and $n$, respectively, Lemma
\ref{lem:interior-edges} states that both $P$ and $Q$ have the same number
of interior edges, which we will denote by $i$. Contracting $K_{m+n}$ along
all of the exterior edges in $P$ and $Q$ does not alter the topological type
of the link $P \cup Q$, and the proportion of random book embeddings of
$K_{m+n}$ for which the linking number of $P\cup Q$ is equal to $\ell$ will
be the same as the proportion of book embeddings of the contracted graph $K'$
in which the linking number of $P \cup Q$ is equal to $\ell$ by a similar
argument as in Theorem \ref{thm:eulerstriangledistribution}. Hence, we arrive
at the following when $i \geq 3$.

\begin{corollary} \label{cor:strictly-increasing-to-monotonic}
Let $P$ and $Q$ be monotonic cycles of length $m$ and $n$, respectively, in
$K_{m+n}$. The proportion of random book embeddings of $K_{m+n}$ in which
the linking number of $P \cup Q$ is equal to $\ell$ is
\begin{equation*}
	\frac{A(2i-1,i+\ell-1)}{(2i-1)!},
\end{equation*}
where $i \geq 2$ is the number of interior edges of both $P$ and $Q$.
\end{corollary}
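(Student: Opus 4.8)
The plan is to reduce the general case of two monotonic cycles $P$ and $Q$ of lengths $m$ and $n$ to the special case already handled in Theorem~\ref{thm:eulerstriangledistribution}, namely that of two strictly increasing $i$-cycles in $K_{2i}$, where $i$ is the common number of interior edges guaranteed by Lemma~\ref{lem:interior-edges}. First I would address orientation: as noted in the remarks preceding the corollary, reversing the orientation of one cycle negates the linking number, sending $\ell$ to $-\ell$, and the symmetry $A(2i-1,i+\ell-1)=A(2i-1,i-\ell-1)$ of the Eulerian numbers makes the resulting proportion identical. Hence it suffices to treat the case where $P$ and $Q$ are both strictly increasing, so that the interior edges of each cycle, when traversed in increasing order of initial vertex, give a monotonic arrangement exactly as in the proof of Theorem~\ref{thm:eulerstriangledistribution}.

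Next I would carry out the contraction argument. The exterior edges of $P$ and $Q$ are arcs of the spine joining consecutive vertices; contracting $K_{m+n}$ along all such exterior edges in $P$ and $Q$ identifies each maximal run of vertices connected by exterior edges of one cycle to a single vertex, and this is an ambient isotopy in $\mathbb{R}^3$ (it can be realized by sliding vertices along the spine), so it does not change the topological type of the link $P\cup Q$, nor the sign of any crossing between an interior edge of $P$ and an interior edge of $Q$. After contraction, each of $P$ and $Q$ has $i$ vertices and $i$ edges, all of which are interior, and the contracted graph $K'$ has $2i$ vertices; crucially the contraction does not disturb the cyclic order of the interior edges around the spine, so the combinatorial incidence pattern of crossings between $P$ and $Q$ in $K'$ is exactly that of two strictly increasing $i$-cycles in $K_{2i}$, where edge $e_j$ crosses precisely $e_{j-1}$ and $e_{j+1}$ modulo $2i$.

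Then I would check that the probabilistic reduction is valid: a uniformly random permutation of the interior edges of $K_{m+n}$, when restricted to the $2i$ interior edges of $P$ and $Q$, induces the uniform distribution on permutations of those $2i$ edges, since each relative ordering arises from an equal number of global permutations. The linking number $\ell(P\cup Q)$ depends only on this relative ordering. Therefore the proportion of random book embeddings of $K_{m+n}$ with $\ell(P\cup Q)=\ell$ equals the proportion of random book embeddings of $K'\cong K_{2i}$ with $\ell(P\cup Q)=\ell$, and by Theorem~\ref{thm:eulerstriangledistribution} (applied with $n$ replaced by $i$) this equals $\dfrac{A(2i-1,i+\ell-1)}{(2i-1)!}$.

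The main obstacle I anticipate is making the "similar argument as in Theorem~\ref{thm:eulerstriangledistribution}" genuinely rigorous in the contracted setting: one must verify that after contraction the two cycles really are strictly increasing $i$-cycles with the correct adjacency of interior edges, i.e. that no two contracted vertices of $P$ coincide (this uses disjointness of $P$ and $Q$, so that between any two interior edges of $P$ there is a vertex of $Q$, which is why $i\ge 2$ is required for $K'$ to be well-defined as $K_{2i}$), and that the sign convention for crossings is preserved. Once the contracted picture is identified with the hypothesis of Theorem~\ref{thm:eulerstriangledistribution}, the conclusion is immediate; the remaining edge case $i=2$ (where $2i-1=3$) can be verified directly against the small Eulerian numbers $A(3,0)=A(3,2)=1$, $A(3,1)=4$ if one wishes to confirm the formula holds down to $i=2$ and not merely $i\ge 3$.
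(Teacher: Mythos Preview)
Your proposal is correct and follows essentially the same approach as the paper: handle orientation via the Eulerian symmetry $A(n,m)=A(n,n-1-m)$, contract $K_{m+n}$ along the exterior edges of $P$ and $Q$ to reduce to the $K_{2i}$ setting of Theorem~\ref{thm:eulerstriangledistribution}, and note that restricting a uniform permutation to the $2i$ relevant edges is again uniform. The only minor discrepancy is in the $i=2$ edge case, where the paper contracts to two $3$-cycles retaining one exterior edge each (since a simple $2$-cycle does not exist) and then reruns the ascent argument on the four interior edges, rather than verifying against $A(3,\cdot)$ directly.
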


The exceptional case when $i=2$ can be verified to follow the same formula
as in Corollary \ref{cor:strictly-increasing-to-monotonic} by contracting to
two $3$-cycles with two interior edges and one exterior edge each, then
applying the argument in Theorem \ref{thm:eulerstriangledistribution} to the
interior edges only. Table \ref{tab:Eulerianlinkingnumberdistributionbycrossing}
gives the values
of $A(2i-1,i+\ell-1)$ for $1 \leq i \leq 5$. The proportion of
random book embeddings for which two cycles with $i$ interior edges have
a linking number of $\ell$ can be obtained by dividing the entries by
$(2i-1)!$.

\begin{table}
    \centering
    \scriptsize
    \begin{tabular}{c|ccccccccccc}
        $i \backslash \ell$ &-5 &-4 &-3 &-2 &-1 &0 &1 &2 &3 &4 &5 \\
        \hline
        1 & & & & & &1 & & & & & \\
        
        2 & & & & &1 &4 &1 & & & & \\
        
        3 & & & &1 &26 &66 &26 &1 & & & \\
        
        4 & & &1 &120 &1191 &2416 &1191 &120 &1 & & \\
        
        5 & &1 &502 &14608 &88234 &156190 &88234 &14608 &502 &1 & \\
        
%        6 &1 &2036 &152637 &2203488 &9738114 &15724248 &9738114 &2203488 &152637 &2036 &1 \\
    \end{tabular}
    \caption{Values of $A(2i-1,i+\ell-1)$}
    \label{tab:Eulerianlinkingnumberdistributionbycrossing}
\end{table}

The following theorem describes the number of disjoint $m$- and
$n$-cycles with a given number of interior edges. In combination with the
previous corollary, this will allow for calculation of the frequency with which
a random $m$-cycle $P$ and disjoint $n$-cycle $Q$ has linking
number $\ell$ in a random book embedding of $K_{m+n}$.

\begin{theorem}\label{thm:cycle-interior-edge-distribution}
Let $m,n \geq 3$. Then the number of disjoint (undirected) monotonic cycles
$P$ and $Q$ in a
book embedding of $K_{m+n}$ so that $P$ is an $m$-cycle and $Q$ is a $n$-cycle,
each with $2 \leq i \leq \min \{m,n\}$ interior edges is
\begin{equation*}
\binom{m}{m-i} \binom{n-1}{n-i} + \binom{n}{n-i} \binom{m-1}{m-i},
\end{equation*}
if $m \neq n$. In the case that $m=n$, the number of disjoint cycles is
\begin{equation*}
\binom{n}{n-i}\binom{n-1}{n-i}.
\end{equation*}
\end{theorem}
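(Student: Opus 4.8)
The plan is to count the disjoint pairs $(P,Q)$ by first choosing which vertices lie on which cycle, and then counting how many monotonic cycles can be formed on each vertex set subject to the interior-edge constraint. Since $P$ and $Q$ are monotonic, a cycle on a given set of vertices is determined (up to reversal) once we know which of its edges are exterior edges of $K_{m+n}$; an exterior edge of the cycle is a pair of cycle-vertices that are adjacent along the spine of $K_{m+n}$ with no other vertex of the \emph{entire} graph between them. So the combinatorial content is: given a choice of which $m+n$ spine positions are split into an $m$-subset $S_P$ and an $n$-subset $S_Q$, count configurations in which $P$ (the unique increasing cycle through $S_P$) has exactly $m-i$ exterior edges and $Q$ has exactly $n-i$ exterior edges. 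An exterior edge of $P$ corresponds to a ``block'' of consecutive spine positions all belonging to $S_P$ of length $\ge 2$, or more precisely to each adjacency within a maximal run of $S_P$-vertices; dually, an interior edge of $P$ occurs exactly where a run of $S_P$ ends and the next vertex belongs to $S_Q$. Thus the number of interior edges of $P$ equals the number of maximal runs of $S_P$ around the circle, which equals the number of maximal runs of $S_Q$. Requiring both to be $i$ is a single condition: the cyclic binary string of length $m+n$ with $m$ symbols of one type and $n$ of the other must have exactly $i$ runs of each type.

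First I would make the run-counting precise: a cyclic arrangement of $m$ $P$'s and $n$ $Q$'s has exactly $i$ maximal $P$-runs and $i$ maximal $Q$-runs, and the number of such arrangements with a \emph{distinguished starting position on the circle} (i.e., counted as labeled spine positions $v_1,\dots,v_{m+n}$) is what we need, because in $K_{m+n}$ the spine positions are labeled. A standard stars-and-bars computation gives the number of ways to break $m$ into $i$ ordered positive parts as $\binom{m-1}{i-1}$ and likewise $\binom{n-1}{i-1}$ for $n$; interleaving $i$ $P$-runs with $i$ $Q$-runs cyclically, then choosing where position $v_1$ falls, contributes the remaining factors. I would carry out this count carefully and check it reduces, after summing over all $i$, to the total number of ways to two-color the circle, as a sanity check. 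The asymmetry between $m$ and $n$ in the claimed formula $\binom{m}{m-i}\binom{n-1}{n-i}+\binom{n}{n-i}\binom{m-1}{m-i}$ should emerge from an ordered-vs-cyclic bookkeeping step: writing $\binom{m}{m-i}=\frac{m}{i}\binom{m-1}{i-1}$ suggests the two terms come from whether a $P$-run or a $Q$-run ``contains'' the marked position $v_1$, or equivalently from a transfer-matrix / cycle-lemma argument where one linearizes the circle by cutting at a canonical spot. When $m=n$ the two terms coincide and one must divide by the symmetry $P\leftrightarrow Q$ (since the cycles are unordered), producing the single term $\binom{n}{n-i}\binom{n-1}{n-i}$ — matching the passage's earlier remark that in $K_{2n}$ one counts unordered pairs.

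The step I expect to be the main obstacle is getting the cyclic-composition count exactly right, including the labeled-spine normalization and the unordered-pair division, so that it matches the stated closed form with the correct asymmetry rather than a symmetric-looking expression off by a factor. Concretely, I would (1) fix the bijection between monotonic $m$-cycles on a labeled vertex subset and that subset itself (using that ``strictly increasing'' pins down the cyclic order, and Corollary~\ref{cor:strictly-increasing-to-monotonic}'s discussion shows increasing vs.\ decreasing does not create new undirected cycles), (2) translate ``$i$ interior edges'' into ``$i$ runs'' via the argument in the proof of Lemma~\ref{lem:interior-edges} (each interior edge of $P$ sits at a run-boundary), (3) count labeled cyclic two-colorings with $i$ runs of each color by cutting the circle at $v_1$ and doing stars-and-bars on the two linear segments produced, handling the two cases according to the color of $v_1$, and (4) divide by $2$ when $m=n$. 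I would then verify the $i=2$ and $i=\min\{m,n\}$ edge cases against small instances (e.g.\ $K_6$, matching Table~\ref{tab:euleriantrianglek6}'s setup of two $3$-cycles) before writing up the general argument.
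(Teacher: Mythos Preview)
Your plan is correct and coincides with the paper's argument: casing on whether $v_1$ lies in $P$ or in $Q$, then using stars-and-bars, is exactly what the paper does. The only cosmetic difference is that the paper phrases the $v_1\in P$ case as ``choose which $i$ of the $m$ edges of $P$ are interior'' (giving $\binom{m}{i}$ directly) and then ``distribute the $n$ vertices of $Q$ into the $i$ resulting gaps with at least one each'' (giving $\binom{n-1}{n-i}$), rather than your runs-in-a-binary-string language; this sidesteps the cyclic-composition bookkeeping you flagged as the likely obstacle, so you may find it cleaner to adopt that phrasing when you write it up.
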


\begin{proof}
Fix a labeling of the vertices of $K_{m+n}$ in cyclic order $v_1,\dots,v_{m+n}$.
Suppose $P$ is a $m$-cycle and $Q$ is a $n$-cycle.

First, suppose $P$ contains $v_1$. If $P$ has $i$ interior edges, there are
$\binom{m}{i}$ ways to choose which of the $m$ edges in $P$ are interior edges.
For each of the $i$ chosen edges in $P$, in order for it to be interior, there
must be a vertex in the cycle $Q$ lying between the initial and terminal
vertices of the edge in $P$. Moreover, for each of the external edges in the
cycle $P$, there cannot be any vertices of $Q$ lying between the initial and
terminal vertices. This create $i$ areas in which the vertices of $Q$ must
be located, one between the initial and terminal vertices of each internal
edge in $P$, with each containing at least one vertex. A stars and bars
argument, in which there are $n-i$ vertices of $Q$ to allocate after placing
one vertex of $Q$ into each of the $i$ spots, and $i-1$ bars to separate the
$i$ spots, leads to $\binom{n-1}{n-i}$ ways of choosing the vertices of $Q$.
This results in $\binom{m}{m-i} \binom{n-1}{n-i}$ choices of $P$ and $Q$ so
that $P$ contains $v_1$ and both cycles have $i$ interior edges.

By an analogous argument, there are $\binom{n}{n-i} \binom{m-1}{m-i}$ ways to
choose $P$ and $Q$ so that $Q$ contains $v_1$, completing the proof when
$m \neq n$.

If $m=n$, there is no distinction between the cases when $v_1$ is in $P$
and $v_1$ is in $Q$.
\end{proof}

The number of disjoint $n$ cycles
in $K_{2n}$ with $i$ interior edges is tabulated in Table
\ref{tab:interior-edges-number-distribution-for-2ngons} for $3 \leq n \leq 10$.

The values $\binom{n}{n-i} \binom{n-1}{n-i}$ appear as OEIS sequence A103371
\cite{a103371} up to a shift in indices due to the cyclic symmetry in the
circular diagrams of book embeddings. The sum over all $i$ gives the number of
ways to choose two disjoint monotonic $n$-cycles in $K_{2n}$. An undirected
monotonic cycle is determined by the vertices in the cycles, so this amounts
to choosing two disjoint subsets of $n$ vertices from the $2n$ vertices in
$K_{2n}$. The number of ways in which this choice can be made is given by
$\binom{2n-1}{n-1}=\binom{2n-1}{n}$.

\begin{table}
    \centering
    \begin{tabular}{c|cccccccccc}
        $n$ $\backslash $ $i$ &1 &2 &3 &4 &5 &6 &7 &8 &9 &10 \\
        \hline
        
        3 &3 &6 &1 & & & & & & & \\
        
        4 &4 &18 &12 &1 & & & & & & \\
        
        5 &5 &40 &60 &20 &1 & & & & & \\
        
        6 &6 &75 &200 &150 &30 &1 & & & & \\
        
        7 &7 &126 &525 &700 &315 &42 &1 & & & \\
        
        8 &8 &196 &1176 &2450 &1960 &588 &56 &1 & & \\
        
        9 &9 &288 &2352 &7056 &8820 &4704 &1008 &72 &1 & \\
        
        10 &10 &405 &4320 &17640 &31752 &26460 &10080 &1620 &90 &1 \\
        \end{tabular}
    \caption{Number of pairs of monotonic $n$-cycles each with $i$ interior
		edges in $K_{2n}$.}
    \label{tab:interior-edges-number-distribution-for-2ngons}
\end{table}

Combining Theorem \ref{thm:cycle-interior-edge-distribution} with Theorem
\ref{thm:eulerstriangledistribution} yields the following corollary.

\begin{corollary}\label{cor:linking-distribution} The proportion of links
$P \cup Q$ with linking number $\ell$ among pairs of $n$-cycles $P$ and
$Q$ in a random book embedding of $K_{2n}$ is
$$\displaystyle
\frac{\displaystyle \sum_{i=1}^{n} \frac{A(2i-1, \ell + i - 1)}{(2i-1)!}
\binom{n}{n-i} \binom{n-1}{n-i}}{\displaystyle \binom{2n-1}{n-1}}.$$
\end{corollary}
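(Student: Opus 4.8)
The plan is to combine the two main structural results---Corollary~\ref{cor:strictly-increasing-to-monotonic} (with its $i=2$ extension), which gives the conditional distribution of the linking number given that both cycles have exactly $i$ interior edges, and Theorem~\ref{thm:cycle-interior-edge-distribution} in the case $m=n$, which counts the pairs of disjoint monotonic $n$-cycles with $i$ interior edges---via the law of total probability. Concretely, I would set up the sample space as the set of all pairs $(P,Q)$ of disjoint undirected monotonic $n$-cycles in $K_{2n}$ together with a uniformly random book embedding (equivalently, a uniformly random permutation of the interior edges), and compute the probability that $\ell(P\cup Q)=\ell$ by conditioning on the number $i$ of interior edges shared by $P$ and $Q$.

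The key steps, in order: First, observe that by Lemma~\ref{lem:interior-edges} the number of interior edges of $P$ equals that of $Q$, so it makes sense to condition on a single value $i$, which ranges over $1\le i\le n$. Second, invoke Theorem~\ref{thm:cycle-interior-edge-distribution} with $m=n$ to say that the number of pairs $(P,Q)$ with exactly $i$ interior edges is $\binom{n}{n-i}\binom{n-1}{n-i}$, and note that summing this over $i$ gives $\binom{2n-1}{n-1}$ (as remarked in the text, since an undirected monotonic cycle is determined by its vertex set and we are partitioning off an $n$-subset of the $2n$ vertices). Third, for each fixed pair $(P,Q)$ with $i$ interior edges, apply Corollary~\ref{cor:strictly-increasing-to-monotonic} (together with the stated $i=2$ verification) to conclude that the proportion of random book embeddings giving $\ell(P\cup Q)=\ell$ is $A(2i-1,\,i+\ell-1)/(2i-1)!$, a quantity that depends only on $i$ and $\ell$, not on the particular pair. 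Fourth, assemble: the overall proportion is
\begin{equation*}
\frac{\sum_{i=1}^{n} \bigl(\text{\# pairs with $i$ interior edges}\bigr)\cdot\bigl(\text{proportion of embeddings giving }\ell\bigr)}{\text{total \# pairs}}
= \frac{\sum_{i=1}^{n}\binom{n}{n-i}\binom{n-1}{n-i}\,\dfrac{A(2i-1,\ell+i-1)}{(2i-1)!}}{\binom{2n-1}{n-1}},
\end{equation*}
which is exactly the claimed formula.

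The main subtlety---more a point requiring care than a genuine obstacle---is justifying that the conditional distribution from Corollary~\ref{cor:strictly-increasing-to-monotonic} can be plugged in uniformly: one must be sure that a uniformly random permutation of \emph{all} $\binom{2n}{2}-2n$ interior edges of $K_{2n}$ restricts to a uniformly random relative ordering of just the $2i$ interior edges of $P\cup Q$, so that the proportion among all random book embeddings of $K_{2n}$ genuinely matches the proportion computed on the contracted graph. This follows because every relative ordering of a fixed subset of edges is induced by the same number of global permutations, but it should be stated explicitly. A secondary bookkeeping point is to confirm that the ranges of $i$ agree: Corollary~\ref{cor:strictly-increasing-to-monotonic} is stated for $i\ge 2$ with the $i=2$ case handled separately and $i=1$ is trivial (the $3$-cycle-type degenerate case with $A(1,\ell)$, nonzero only for $\ell=0$), so the sum over $1\le i\le n$ is legitimate. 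Once these points are in place, the corollary is an immediate consequence of the two cited results.
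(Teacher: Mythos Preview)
Your proposal is correct and follows exactly the approach the paper intends: the corollary is stated without proof, merely as the combination of Theorem~\ref{thm:cycle-interior-edge-distribution} with Theorem~\ref{thm:eulerstriangledistribution} (via Corollary~\ref{cor:strictly-increasing-to-monotonic}), and you have supplied the details of that combination. Your explicit treatment of the restriction of the uniform permutation to the $2i$ relevant edges and of the boundary cases $i=1,2$ simply makes precise what the paper leaves implicit.
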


\begin{figure} \centering \begin{tikzpicture} \begin{axis}[ xlabel=Linking
Number, ylabel=Proportion of links, legend
style={at={(0.05,.85)},anchor=west} ]

		\addplot[thick,color=cyan] coordinates { (-2, 0.0008333333333)
		(-1, 0.1216666) (0, 0.755) (1,0.12166666) (2,0.0008333333) };

		\addplot[thick,color=red] coordinates { (-3, 0.00000566893424)
		(-2, 0.003537414966) (-1, 0.1667517007) (0, 0.6594104308) (1, 0.1667517007)
		(2, 0.003537414966) (3,0.00000566893424) };

		\addplot[thick,color=blue] coordinates { (-4, 0.00000007873519778)
		(-3, 0.00004247326503) (-2, 0.008067033398) (-1, 0.1955238603) (0,
		0.5927332224) (1,0.1955238603) (2,0.008067033398) (3,0.00004247326503) (4,
		0.00000007873519778) };

		\addplot[thick,color=green] coordinates { (-5, 0) (-4,
		0.0000002893464293) (-3, 0.0001625262039) (-2, 0.01438998832) (-1,
		0.2138922072) (0, 0.5437472306) (1, 0.2138922072) (2, 0.01438998832) (3,
		0.0001625262039) (4,0.0000002893464293) (5,0) };

		\legend{$K_6$, $K_8$, $K_{10}$, $K_{12}$} \end{axis} \end{tikzpicture}
		\caption{Proportion of disjoint pairs of $n$-cycles with a given linking
		number in a random book embedding of $K_{2n}$.}
		\label{fig:combinedeulerianandtriangledistributiongraph} \end{figure}
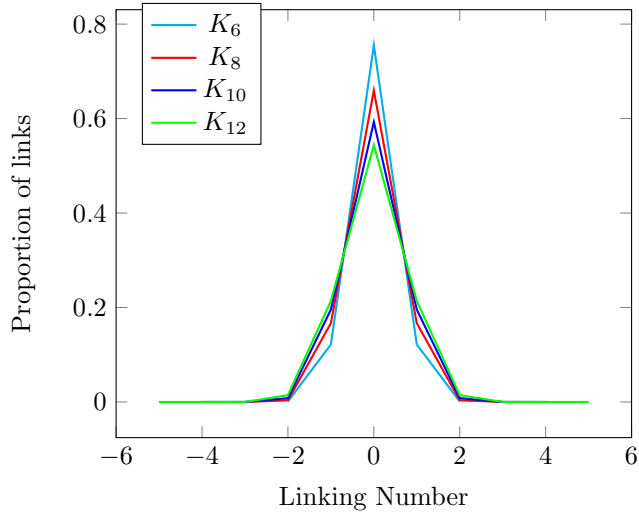

The values from Corollary \ref{cor:linking-distribution} for $n=3$, $4$, $5$,
and $6$ are computed and illustrated in
Figure \ref{fig:combinedeulerianandtriangledistributiongraph}. 
Notice that for two $n$-cycles in $K_{2n}$, the maximum number of crossings
that can appear is $2n$, meaning that an upper bound for the absolute value of
the linking number is $n$. Thus, we can normalize the linking number of two
monotonic cycles by dividing by $n$. The distribution of links with a given
normalized linking number when $n=100$, $200$, $500$, and $1000$,
are shown in Figure \ref{fig:normalizedlinking}. As $n$ increases, the
proportion of links with linking number 0 decreases. However, this behavior
is misleading as links are distributed among a larger range of possible
values for the linking number as $n$ increases. Normalizing the graph to a
density plot as in Figure \ref{fig:normalizedlinkingdensity} gives a very
different picture of the behavior of linking numbers of disjoint $n$-cycles
in random book embeddings of $K_{2n}$. As the number of vertices increases, the
normalized linking numbers tend closer to $0$ as $n$ increases. This model
behaves differently from other models where the mean squared linking number
grows as $\theta(n^2)$, as in \cite{Arsuaga09, Arsuaga, panagiotou10}).

\begin{figure} \centering

	\begin{tikzpicture} \begin{axis} xlabel=Normalized Linking Number,
	ylabel=Proportion, \addplot +[mark=none] table [col sep=comma]
	{full-even-100.csv}; \addplot +[mark=none] table [col sep=comma]
	{full-even-200.csv}; \addplot +[mark=none] table [col sep=comma]
	{full-even-500.csv}; \addplot +[mark=none] table [col sep=comma]
	{full-even-1000.csv}; \legend{$K_{100}$, $K_{200}$, $K_{500}$, $K_{1000}$}
	\end{axis} \end{tikzpicture}

	\caption{Proportion of links with specified  normalized linking number for
	two monotonic $n$-cycles in a random book embedding of $K_{2n}$}
	\label{fig:normalizedlinking} \end{figure}
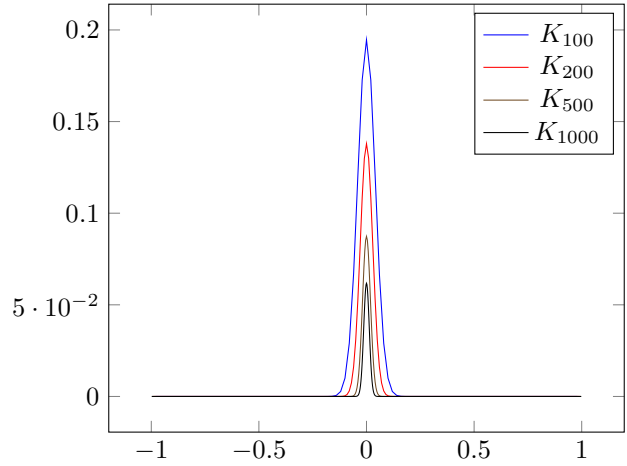

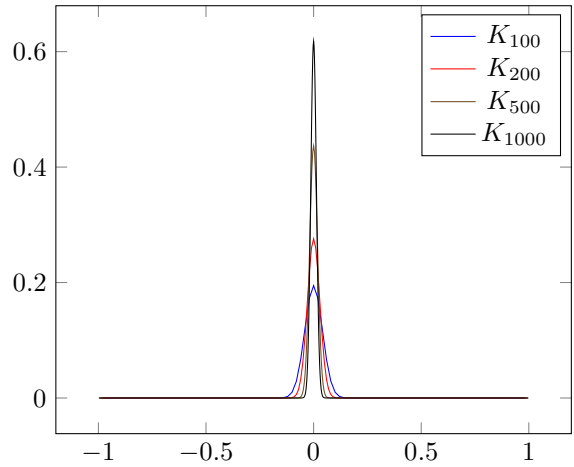
\begin{figure} \centering

	\begin{tikzpicture} \begin{axis} xlabel=Normalized Linking Number,
	ylabel=Density, \addplot +[mark=none] table [col sep=comma]
	{full-even-100-density.csv}; \addplot +[mark=none] table [col sep=comma]
	{full-even-200-density.csv}; \addplot +[mark=none] table [col sep=comma]
	{full-even-500-density.csv}; \addplot +[mark=none] table [col sep=comma]
	{full-even-1000-density.csv}; \legend{$K_{100}$, $K_{200}$, $K_{500}$,
	$K_{1000}$} \end{axis} \end{tikzpicture}

	\caption{Density of links with specified normalized linking number for two
	monotonic $n$-cycles
	in a random book embedding of $K_{2n}$} \label{fig:normalizedlinkingdensity} \end{figure}

In fact, using the exponential generating function for the Eulerian numbers,
we can determine an explicit formula for the mean squared linking number
in terms of the number of interior edges $i$. We will need the following
fact from differential calculus.

\begin{lemma} \label{lem:derivative}
	Let $g(x)=\frac{x^n}{(1-x)^m}$. Then for $k\geq 1$,
	$g^{(k)}(0)=k! \binom{k-n+m-1}{m-1}$.
\end{lemma}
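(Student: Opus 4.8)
The plan is to compute the Taylor coefficients of $g$ directly from the known power series expansion of $1/(1-x)^m$ and then read off $g^{(k)}(0)$ via the general relation between Taylor coefficients and derivatives. First I would recall the binomial series
\begin{equation*}
	\frac{1}{(1-x)^m} = \sum_{j=0}^\infty \binom{j+m-1}{m-1} x^j,
\end{equation*}
valid for $|x|<1$, which follows from the generalized binomial theorem (or by induction on $m$ using term-by-term differentiation). Multiplying by $x^n$ shifts the index, giving
\begin{equation*}
	g(x) = \frac{x^n}{(1-x)^m} = \sum_{j=0}^\infty \binom{j+m-1}{m-1} x^{j+n}
	= \sum_{k=n}^\infty \binom{k-n+m-1}{m-1} x^{k},
\end{equation*}
where in the last step I substitute $k=j+n$.

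Next I would invoke the standard fact that if a function $h$ is analytic near $0$ with $h(x)=\sum_{k\geq 0} a_k x^k$, then $h^{(k)}(0)=k!\,a_k$. Applying this to $g$, the coefficient of $x^k$ is $a_k = \binom{k-n+m-1}{m-1}$ when $k\geq n$, so $g^{(k)}(0) = k!\binom{k-n+m-1}{m-1}$ for $k\geq n$. It remains to check the range $1\leq k < n$: there the coefficient $a_k$ is $0$, and correspondingly $\binom{k-n+m-1}{m-1}=0$ because the top entry $k-n+m-1$ is a nonnegative integer strictly less than $m-1$ (using $k\leq n-1$, so $k-n+m-1\leq m-2$), so the binomial coefficient vanishes and the formula $g^{(k)}(0)=k!\binom{k-n+m-1}{m-1}$ still holds. (If one adopts the convention that $\binom{r}{m-1}$ with $r$ a negative integer is $0$, this also covers any edge cases.) Thus the formula is valid for all $k\geq 1$.

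There is no serious obstacle here; the only point requiring a moment's care is the boundary range $1\leq k<n$, where one must confirm that the combinatorial formula correctly returns $0$ — this is the step I would state explicitly rather than leave implicit. Everything else is the routine manipulation of a geometric-type series and the dictionary between Taylor coefficients and derivatives.
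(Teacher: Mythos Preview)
Your approach is essentially the paper's: both extract the Taylor coefficient of $g$ from the power series of $(1-x)^{-m}$, with the paper arriving at $\binom{k-n+m-1}{m-1}$ via a direct stars-and-bars count of $m$-tuples of nonnegative integers summing to $k-n$, while you cite the closed-form binomial series up front. One small correction in your boundary analysis: for $1\le k<n$ the top entry $k-n+m-1$ need not be nonnegative (take $k=1$, $n=5$, $m=2$, giving $-3$), so your ``nonnegative and $<m-1$'' justification does not cover all cases; what actually makes the formula return $0$ throughout this range is exactly the convention $\binom{r}{m-1}=0$ for $r<m-1$ (including negative $r$) that you relegate to a parenthetical---the paper adopts the same convention explicitly.
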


\begin{proof}
	For $|x|<1$, we can express $\frac{1}{1-x}$ as the power series
	\begin{equation*}
		\frac{1}{1-x} = x^0 + x^1 + x^2 + x^3 + \dots.
	\end{equation*}
	Then,
	\begin{equation*}
		g(x) = x^n(x^0+x^1+x^2+x^3+\dots)^m,
	\end{equation*}
	so that $\frac{g^{(k)}(0)}{k!}$ is the coefficient of $x^k$ in the
	power series expansion of $g(x)$. This is the  $x^{k-n}$ coefficient
	of $(x^0+x^1+x^2+x^3+\dots)^m$, which is the number of ways to choose
	$m$ non-negative integers that add up to $k-n$. A stars and bars argument
	counts this as $\binom{k-n+m-1}{m-1}$, with this binomial coefficient
	defined to be $0$ if $k<n$.
\end{proof}

We are now ready to show that the mean squared linking number of
two disjoint cycles grows linearly in the number of interior edges $i$.
Heurestically, this means that we expect that the linking number grows
roughly as the square root of the number of internal edges.

\begin{theorem}\label{thm:meansquaredlinking}
	Let $P\cup Q$ be a union of disjoint $n$ cycles with $i$ interior
	edges each. Then the mean squared linking number of
	$P \cup Q$ in a random book embedding is $\frac{i}{6}$.
\end{theorem}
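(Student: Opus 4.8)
The plan is to reinterpret the linking number as a shifted ascent statistic of a uniformly random permutation and then read off its variance from the exponential generating function for the Eulerian numbers, using Lemma~\ref{lem:derivative} to extract the relevant Taylor coefficients.

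First I would invoke Corollary~\ref{cor:strictly-increasing-to-monotonic}: the proportion of random book embeddings for which $\ell(P\cup Q)=\ell$ is $A(2i-1,\,i+\ell-1)/(2i-1)!$. Writing $m=i+\ell-1$, the mean squared linking number is
\begin{equation*}
\frac{1}{(2i-1)!}\sum_{m=0}^{2i-2}\bigl(m-(i-1)\bigr)^{2}A(2i-1,m),
\end{equation*}
which is precisely $\mathbb{E}\bigl[(D-(i-1))^{2}\bigr]$, where $D$ denotes the number of ascents of a uniformly random permutation in $S_{2i-1}$. Since $\mathbb{E}[D]$ will turn out to equal $i-1$ (the center of the support $\{0,\dots,2i-2\}$, as is also forced by the symmetry $A(N,m)=A(N,N-1-m)$ of the Eulerian numbers), the shift is exactly by the mean, so the mean squared linking number equals $\operatorname{Var}(D)$. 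It therefore suffices to show that the number of ascents of a uniformly random permutation in $S_{N}$ has variance $(N+1)/12$, and then set $N=2i-1$, obtaining $2i/12=i/6$.

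For $\operatorname{Var}(D)$ I would pass to factorial moments. Since $A_{N}(t)=\sum_{m}A(N,m)t^{m}=N!\,\mathbb{E}[t^{D}]$, differentiating $k$ times and setting $t=1$ gives $A_{N}^{(k)}(1)/N!=\mathbb{E}[D(D-1)\cdots(D-k+1)]$, so I need $A_{N}'(1)$ and $A_{N}''(1)$. These come out of $\sum_{N}A_{N}(t)\,x^{N}/N!=(t-1)/(t-e^{(t-1)x})$ via the substitution $t=1+s$, which turns the right-hand side into $s/((1+s)-e^{sx})$. Expanding $(1+s)-e^{sx}=s(1-x)-\tfrac12 s^{2}x^{2}-\tfrac16 s^{3}x^{3}-\cdots$ exhibits a common factor of $s$ in numerator and denominator, leaving $\bigl((1-x)-\tfrac12 s x^{2}-\tfrac16 s^{2}x^{3}-\cdots\bigr)^{-1}$, which expands as a power series in $s$ whose coefficients are finite sums of terms $c\,x^{n}/(1-x)^{m}$. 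Collecting the coefficients of $s$ and $s^{2}$ gives
\begin{equation*}
\sum_{N}\frac{A_{N}'(1)}{N!}x^{N}=\frac{x^{2}}{2(1-x)^{2}},\qquad \sum_{N}\frac{A_{N}''(1)}{N!}x^{N}=\frac{x^{3}}{3(1-x)^{2}}+\frac{x^{4}}{2(1-x)^{3}}.
\end{equation*}
Lemma~\ref{lem:derivative} then extracts the coefficient of $x^{N}$: $A_{N}'(1)/N!=\tfrac12\binom{N-1}{1}=(N-1)/2$ and $A_{N}''(1)/N!=\tfrac13\binom{N-2}{1}+\tfrac12\binom{N-2}{2}=(N-2)/3+(N-2)(N-3)/4$. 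Hence $\mathbb{E}[D]=(N-1)/2$ (so $\mathbb{E}[D]=i-1$ when $N=2i-1$, as promised), $\mathbb{E}[D^{2}]=\mathbb{E}[D(D-1)]+\mathbb{E}[D]=(N-2)/3+(N-2)(N-3)/4+(N-1)/2$, and subtracting $\mathbb{E}[D]^{2}=(N-1)^{2}/4$ and combining over the common denominator $12$ gives $\operatorname{Var}(D)=(N+1)/12$. Setting $N=2i-1$ yields $\operatorname{Var}(D)=i/6$, which is the claimed mean squared linking number.

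The main obstacle is the extraction of the $t$-derivatives at $t=1$: the function $(t-1)/(t-e^{(t-1)x})$ is a $0/0$ indeterminate form there, so one cannot simply differentiate in $t$. The decisive move is the substitution $s=t-1$ combined with the observation that the leading term $s(1-x)$ of the denominator cancels the numerator's factor of $s$; after that the computation is the routine power-series bookkeeping supplied by Lemma~\ref{lem:derivative}. As an independent check, one can write $D=\sum_{j=1}^{N-1}X_{j}$ with $X_{j}$ the indicator of an ascent at position $j$, use $\mathbb{E}[X_{j}]=\tfrac12$, $\mathbb{E}[X_{j}X_{j+1}]=\tfrac16$, and $\mathbb{E}[X_{j}X_{k}]=\tfrac14$ for $|j-k|\ge 2$, and recover $\operatorname{Var}(D)=(N+1)/12$ directly; but the generating-function argument is the one that stays within the framework developed in the earlier sections.
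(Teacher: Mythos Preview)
Your proof is correct and follows the same high-level strategy as the paper: both compute the second moment of the shifted ascent statistic of a uniform permutation in $S_{2i-1}$ by differentiating the exponential generating function $\frac{t-1}{t-e^{(t-1)x}}$ in $t$, taking $t\to 1$, and then invoking Lemma~\ref{lem:derivative} to read off the $x^{2i-1}$ coefficient. The substantive difference is in how the $0/0$ obstruction at $t=1$ is handled. The paper pre-multiplies by $t^{-i+1}$, then uses logarithmic differentiation and several applications of L'H\^opital's rule to evaluate $\lim_{t\to 1}f_t/f$ and $\lim_{t\to 1}(f_{tt}/f-(f_t/f)^2)$ before reassembling $f_{tt}$. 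Your substitution $s=t-1$, followed by cancelling the common factor of $s$ in numerator and denominator, removes the singularity at the outset and reduces the whole computation to reading off the $s^1$ and $s^2$ coefficients of a geometric-type series; this is noticeably shorter and more transparent. Your probabilistic reframing---identifying the mean squared linking number as $\operatorname{Var}(D)$ for $D$ the ascent count, with the centering forced by the symmetry $A(N,m)=A(N,N-1-m)$---is also a conceptual gain over the paper's direct manipulation, and your remark that the indicator decomposition $D=\sum X_j$ gives an independent elementary verification of $\operatorname{Var}(D)=(N+1)/12$ is a nice sanity check the paper does not mention.
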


\begin{proof}
	The exponential generating function for the Eulerian numbers is
	\begin{equation*}
		\sum_{n=0}^\infty \sum_{m=0}^\infty A(n,m) t^m \frac{x^n}{n!} =
			\frac{t-1}{t-e^{(t-1)x}}.
	\end{equation*}
	Multiplying both sides by $t^{-i+1}$, we arrive at,
	\begin{equation*}
		\sum_{n=0}^\infty \sum_{m=0}^\infty A(n,m) t^{m-i+1} \frac{x^n}{n!} =
			\frac{t^{-i+1}(t-1)}{t-e^{(t-1)x}}.
	\end{equation*}
	Notice that differentiating the left-hand side twice with respect to $t$ and taking
	the limit as $t \rightarrow 1$ yields
	\begin{equation*}
		\sum_{n=0}^\infty \sum_{m=0}^\infty \left( (m-i+1)^2-(m-i+1)  \right)
			A(n,m) \frac{x^n}{n!}.
	\end{equation*}
	Differentiating this expression $2i-1$ times with respect to $x$ and
	evaluating at $x=0$ results in
	\begin{equation*}
		\sum_{m=0}^\infty (m-i+1)^2 A(2i-1,m) - (m-i+1)A(2i-1,m).
	\end{equation*}
	After a substitution of $\ell = m-i+1$, this becomes
	\begin{align*}
		\sum_{\ell=-i+1}^{i-1} A(2i-1,i+\ell-1) \ell^2 - A(2i-1,i+\ell-1) \ell
			&=\sum_{\ell=-i+1}^{i-1}A(2i-1,i+\ell-1) \ell^2\\
			&=(2i-1)!E[ \ell(P\cup Q)^2],
	\end{align*}
	as the symmetry in the Eulerian triangle means that the expected value
	of the linking number is 0. Hence, the second part of the summation
	vanishes.

	We now repeat the differentiation on the exponential generating function to
	find an equivalent expression utilizing logarithmic differentiation. We
	set $f(t,x)$ to be the exponential generating function,
	\begin{equation*}
		f(t,x)=\frac{t^{-i+1}(t-1)}{t-e^{(t-1)x}},
	\end{equation*}
	and first compute using L'H\^opital's rule,
	\begin{equation*}
		\lim_{t \rightarrow 1} f(t,x)= 1 \cdot
			\lim_{t\rightarrow 1} \frac{t-1}{t-e^{(t-1)x}}
			= \lim_{t \rightarrow 1} \frac{1}{1-xe^{(t-1)x}} = \frac{1}{1-x}.
	\end{equation*}
	Using logarithmic differentiation, we find that,
	\begin{align*}
		\frac{f_t(t,x)}{f(t,x)} &= \frac{-i+1}{t} + \frac{1}{t-1} -
			\frac{1-xe^{(t-1)x}}{t-e^{(t-1)x}}\\
			&= \frac{-i+1}{t} + \frac{(t-e^{(t-1)x})-(t-1)(1-xe^{(t-1)x})}{
				(t-1)(t-e^{(t-1)x})}\\
			&= \frac{-i+1}{t} + \frac{1-e^{(t-1)x} + (t-1)xe^{(t-1)x}}{(t-1)
				(t-e^{(t-1)x})}.
	\end{align*}
	Taking the limit as $t \rightarrow 1$ using L'H\^opital's rule twice, we
	obtain,
	\begin{align*}
		\lim_{t \rightarrow 1} \frac{f_t(t,x)}{f(t,x)} &=
			(-i+1) + \lim_{t \rightarrow 1} \frac{(t-1)x^2 e^{(t-1)x}}{
			(t-e^{(t-1)x}) + (t-1)(1-xe^{(t-1)x})}\\
		&= (-i+1) + \lim_{t \rightarrow 1} \frac{x^2e^{(t-1)x}+(t-1)x^3e^{(t-1)x}}{
			1-xe^{(t-1)x}+1-xe^{(t-1)x} + (t-1)(-x^2e^{(t-1)x})}\\
		&= (-i+1)+\frac{x^2}{2} \cdot \frac{1}{1-x}.
	\end{align*}
	The second derivative of $\log f(t,x)$ is
	\begin{align*}
		\frac{f_{tt}(t,x)}{f(t)} &- \left(\frac{f_t(t,x)}{f(t,x)}\right)^2
			= -\frac{-i+1}{t^2} - \frac{1}{(t-1)^2} +
				\frac{x^2e^{(t-1)x}}{t-e^{(t-1)x}} +
				\frac{(1-xe^{(t-1)x})^2}{(t-e^{(t-1)x})^2}\\
			&= -\frac{-i+1}{t^2} +
				\frac{-(t-e^{(t-1)x})^2+(t-1)^2[(t-e^{(t-1)x})x^2e^{(t-1)x}
					+(1-xe^{(t-1)x})^2]}{(t-1)^2(t-e^{(t-1)x})^2}.
	\end{align*}
	Taking the limit as $t \rightarrow 1$ using L'H\^opital's rule four times
	yields,
	\begin{equation*}
		\lim_{t \rightarrow 1} \frac{f_{tt}(t,x)}{f(t)} -
			\left(\frac{f_t(t,x)}{f(t,x)}\right)^2
			= -(-i+1) + \frac{x^3}{3} \cdot \frac{1}{(1-x)^2} - \frac{x^4}{12}
				\cdot \frac{1}{(1-x)^2}.
	\end{equation*}
	We can then find,
	\begin{align*}
		\lim_{t \rightarrow 1} f_{tt}(t,x) &=
			\lim_{t \rightarrow 1} f(t)\left(
				\frac{f_{tt}(t,x)}{f(t)}-\left(\frac{f_t(t,x)}{f(t,x)}\right)^2
				+ \left(\frac{f_t(t,x)}{f(t,x)}\right)^2\right)\\
			&= \frac{i(i-1)}{1-x} + \frac{(-i+1)x^2}{(1-x)^2}
				+ \left(\frac{x^3}{3} + \frac{x^4}{6}\right)
				\frac{1}{(1-x)^3}.
	\end{align*}
	By Lemma \ref{lem:derivative}, the $(2i-1)$-th derivative in $x$
	evaluated at $x=0$ is
	\begin{align*}
		(2i-1)!&\left(i(i-1)+(-i+1)(2i-2)+\frac{1}{3}\binom{2i-2}{2}
			+\frac{1}{6}\binom{2i-3}{2}\right)\\
			&= (2i-1)!\left((i-1)(-i+2)+\frac{(2i-2)(2i-3)}{6}
				+ \frac{(2i-3)(2i-4)}{12}\right)\\
			&=(2i-1)!\frac{i}{6}.
	\end{align*}
	Hence,
	\begin{equation*}
		(2i-1)!E[\ell(P\cup Q)^2] = (2i-1)! \frac{i}{6},
	\end{equation*}
	completing the proof of the theorem.
\end{proof}

Using Theorem \ref{thm:meansquaredlinking}, we can find the asymptotic behavior
of the mean squared linking number over all pairs of disjoint $n$ cycles
in $K_{2n}$. Recall that a function $f(n)$ is in order $\theta(n)$ if
there are positive constants $a$, $A$, and $N$ such that
$an \leq f(n) \leq An$ for all $n> N$.

\begin{theorem}\label{thm:totallinking}
	Let $n\geq 3$. Then the mean squared linking number of two cycles
	$P$ and $Q$ taken over all pairs of disjoint $n$-cycles across all
	random book embeddings of $K_{2n}$ is in order $\theta(n)$.
\end{theorem}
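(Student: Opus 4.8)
The plan is to combine the two main ingredients already established: Corollary~\ref{cor:linking-distribution}, which expresses the proportion of links with a given linking number as a weighted average of Eulerian proportions, and Theorem~\ref{thm:meansquaredlinking}, which says that the mean squared linking number \emph{conditioned on having $i$ interior edges each} is exactly $i/6$. By the law of total expectation, the mean squared linking number over all pairs of disjoint $n$-cycles in a random book embedding of $K_{2n}$ equals
\begin{equation*}
	E[\ell(P\cup Q)^2] = \frac{1}{\binom{2n-1}{n-1}} \sum_{i=1}^{n}
		\frac{i}{6} \binom{n}{n-i}\binom{n-1}{n-i},
\end{equation*}
using Theorem~\ref{thm:cycle-interior-edge-distribution} (in the $m=n$ case) to count the pairs with $i$ interior edges each. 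So the whole problem reduces to showing that
\begin{equation*}
	S_n := \frac{1}{\binom{2n-1}{n-1}} \sum_{i=1}^{n} i \binom{n}{n-i}\binom{n-1}{n-i}
\end{equation*}
is in order $\theta(n)$.

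Next I would estimate $S_n$ by elementary means. Writing $j = n-i$, the sum becomes $\sum_{j=0}^{n-1}(n-j)\binom{n}{j}\binom{n-1}{j}$, which splits as $n\sum_j \binom{n}{j}\binom{n-1}{j} - \sum_j j\binom{n}{j}\binom{n-1}{j}$. The first sum is a Vandermonde-type sum equal to $\binom{2n-1}{n}=\binom{2n-1}{n-1}$, so the $n\sum_j(\cdots)$ term contributes exactly $n$ to $S_n$. Hence $S_n = n - T_n$ where $T_n = \frac{1}{\binom{2n-1}{n-1}}\sum_j j\binom{n}{j}\binom{n-1}{j}$, and it suffices to show $T_n \le cn$ for some constant $c<1$ (for the upper bound on $S_n$) together with a trivial lower bound $S_n \ge 1 > 0$ — but that only gives $S_n = O(n)$ and $S_n$ bounded below by a constant, not $S_n = \theta(n)$. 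To get the lower bound of order $n$, I would instead show directly that $T_n \le (1-\epsilon)n$: using $j\binom{n}{j} = n\binom{n-1}{j-1}$ one rewrites $\sum_j j\binom{n}{j}\binom{n-1}{j} = n\sum_j \binom{n-1}{j-1}\binom{n-1}{j} = n\binom{2n-2}{n-2}$ by Vandermonde again. Therefore
\begin{equation*}
	T_n = \frac{n\binom{2n-2}{n-2}}{\binom{2n-1}{n-1}}
		= n\cdot\frac{\binom{2n-2}{n-2}}{\binom{2n-1}{n-1}} = n\cdot\frac{n-1}{2n-1},
\end{equation*}
using the standard identity $\binom{2n-1}{n-1} = \binom{2n-2}{n-2}+\binom{2n-2}{n-1}$ together with symmetry $\binom{2n-2}{n-2}=\binom{2n-2}{n}$. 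Thus $S_n = n - n\frac{n-1}{2n-1} = \frac{n^2}{2n-1}$, and the mean squared linking number over all pairs is exactly $\frac{1}{6}\cdot\frac{n^2}{2n-1}$.

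Finally I would conclude: since $\frac{n^2}{2n-1}$ satisfies $\frac{n}{2} < \frac{n^2}{2n-1} \le n$ for all $n\ge 1$, the quantity $\frac{1}{6}\cdot\frac{n^2}{2n-1}$ lies between $\frac{n}{12}$ and $\frac{n}{6}$, so it is in order $\theta(n)$, completing the proof. In fact this argument yields the clean closed form $E[\ell(P\cup Q)^2] = \frac{n^2}{6(2n-1)} \sim \frac{n}{12}$ as $n\to\infty$, which is a bit stronger than the stated $\theta(n)$ bound and worth recording. I expect the only real obstacle to be bookkeeping with the binomial identities — making sure the Vandermonde convolutions are applied with the correct index ranges and that edge terms ($j=0$ or $i=n$) are handled consistently with the conventions in Lemma~\ref{lem:interior-edges} and Theorem~\ref{thm:cycle-interior-edge-distribution} — but there is no analytic difficulty, since once the sum is evaluated in closed form the $\theta(n)$ claim is immediate.
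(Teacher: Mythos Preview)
Your approach is essentially the paper's: combine Theorem~\ref{thm:cycle-interior-edge-distribution} with Theorem~\ref{thm:meansquaredlinking}, then evaluate the resulting sum via Vandermonde, and your final bounds $\frac{n}{12}$ and $\frac{n}{6}$ match the paper's exactly. The algebraic route differs only cosmetically (you substitute $j=n-i$ and split; the paper uses $i\binom{n}{i}=n\binom{n-1}{i-1}$ directly to reach $\sum\binom{n-1}{i-1}^2$).

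There is one small bookkeeping slip, of exactly the kind you anticipated. You start the sum at $i=1$, but the formula $E[\ell^2\mid i]=i/6$ from Theorem~\ref{thm:meansquaredlinking} does not hold there: with one interior edge each the two cycles have no crossings, so the linking number is identically $0$, not of mean square $1/6$. The paper accordingly sums from $i=2$ and obtains
\[
E[\ell(P\cup Q)^2]=\frac{n}{6}\left(\frac{n}{2n-1}-\frac{1}{\binom{2n-1}{n-1}}\right),
\]
which differs from your $\frac{n^2}{6(2n-1)}$ by the exponentially small term $\frac{n}{6\binom{2n-1}{n-1}}$. This has no effect on the $\theta(n)$ conclusion, but your ``clean closed form'' is off by that correction.
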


\begin{proof}
	By combining Theorem \ref{thm:cycle-interior-edge-distribution}
	and Theorem \ref{thm:meansquaredlinking} and summing over
	the number of interior edges, the mean squared
	linking number is
	\begin{equation*}
		\frac{1}{\binom{2n-1}{n-1}} \sum_{i=2}^n
			\binom{n}{n-i} \binom{n-1}{n-i} \frac{i}{6}.
	\end{equation*}
	Since
	\begin{equation*}
		i\binom{n}{n-i}=i \binom{n}{i}=n \binom{n-1}{i-1},
	\end{equation*}
	this becomes
	\begin{equation}
		\frac{1}{\binom{2n-1}{n-1}} \sum_{i=2}^n
			\frac{n}{6} \binom{n-1}{i-1}^2
		= \frac{n}{6} \cdot \frac{1}{\binom{2n-1}{n-1}} \sum_{i=2}^n
			\binom{n-1}{i-1}^2.
		\label{eqn:summation}
	\end{equation}
	Using Vandermonde's identity, the summation part of the right-hand side
	becomes
	\begin{equation*}
		\sum_{i=2}^n \binom{n-1}{i-1}^2
			= \left(\sum_{i=0}^{n-1} \binom{n-1}{i}^2\right) - \binom{n-1}{0}^2
			= \binom{2n-2}{n-1} - 1.
	\end{equation*}
	Thus, Equation \eqref{eqn:summation} yields
	\begin{equation*}
		\frac{n}{6} \cdot \frac{1}{\binom{2n-1}{n-1}}
			\left( \binom{2n-2}{n-1} - 1 \right)
		= \frac{n}{6} \left( \frac{n}{2n-1} -
			\frac{1}{\binom{2n-1}{n-1}}\right).
	\end{equation*}
	For an upper bound, we have
	\begin{equation*}
		\frac{n}{6} \left( \frac{n}{2n-1} -
			\frac{1}{\binom{2n-1}{n-1}}\right)
			\leq \frac{n}{6}\cdot  \frac{n}{2n-1}
			\leq \frac{n}{6}.
	\end{equation*}
	For a lower bound, we note that if $n\geq 3$,
	\begin{equation*}
		\binom{2n-1}{n-1}= \frac{2n-1}{1} \cdot \frac{2n-2}{2} \cdot
			\cdots \cdot \frac{n+1}{n-1} \cdot \frac{ n}{n}
		\geq (2n-1) (n-1) \geq 2(2n-1).
	\end{equation*}
	Hence,
	\begin{equation*}
		\frac{n}{6} \left( \frac{n}{2n-1} -
			\frac{1}{\binom{2n-1}{n-1}}\right)
		\geq \frac{n}{6} \left( \frac{n}{2n-1} -
			\frac{1}{2(2n-1)}\right)
		= \frac{n}{6} \cdot \frac{n-\frac{1}{2}}{2n-1}
		= \frac{n}{6} \cdot \frac{1}{2} = \frac{n}{12}.
	\end{equation*}
\end{proof}

Sample calculations of the mean squared linking number of two $n$-cycles in
$K_{2n}$ can be seen to asymptotically approach $\frac{n}{12}$, as seen from
the nearly linearly relationship between $n$ and the mean squared linking
number in Figure \ref{fig:meansquaredlinking}. When $n=100$ and $n=1000$, the
approximate value of the mean squared linking number can be computed from the
summation formula in Theorem \ref{thm:totallinking} to be $\approx 8.37521$
and $\approx 83.375$, respectively.

\begin{figure} \centering \begin{tikzpicture} \begin{axis}[ xlabel=$n$, ylabel=Mean squared linking number ]

		\addplot[thick,color=black] coordinates { (3,1/4) (4,38/105) (5,115/252)
		(6,251/462) (7,497/792) (8,13724/19305) (9,2271/2860) (10,243095/277134)
		(11,677435/705432) (12,705431/676039) (13,2343601/2080120) 
		(20,35345263799/20676979323) (30,150336332497705195/59132290782430712)};

		\end{axis} \end{tikzpicture}
		\caption{Mean squared linking number of two disjoint $n$-cycles
		in a random book embedding of $K_{2n}$}
		\label{fig:meansquaredlinking} \end{figure}
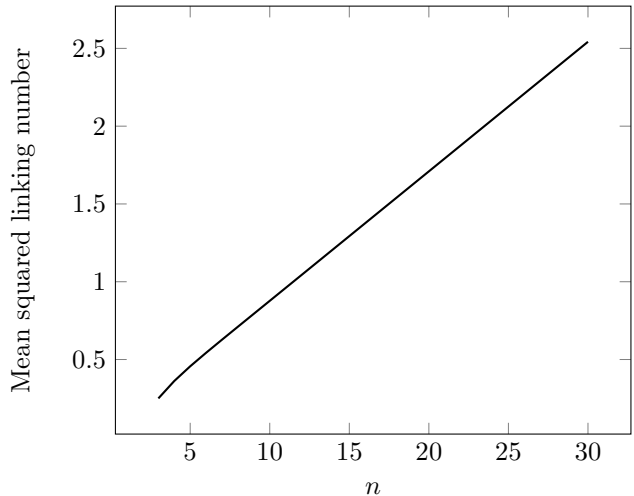

\section{Links in random book embeddings of $K_6$}

In this section, we consider the special case of random book embeddings of
$K_6$. Rowland has studied all possible topological types of
book embeddings of $K_6$, showing that the set of non-trivial knots and
links that appear are the trefoil knot, figure-eight knot, the Hopf link,
and the Solomon's link \cite{Rowland}. Any two-component link in $K_6$ must
consist
of two disjoint $3$-cycles, and every $3$-cycle is necessarily monotonic.
Moreover, the trivial link has linking number $0$, the Hopf link has
linking number $\pm 1$, and the Solomon's link (shown in Figure
\ref{fig:ascentlinkingnumberexample}) has linking number $\pm 2$.
Hence, we can utilize Theorems \ref{thm:cycle-interior-edge-distribution}
and \ref{thm:meansquaredlinking} and in the case that $n=3$ to determine
the probabilities of each type of link occuring in a random book embedding.

We separately consider the cases when the
number of interior edges in the $3$-cycles is $i=1$, $2$, and $3$ as in
Figure \ref{fig:k6interior}, and determine the probability of each type of
link occuring in each case. We can then combine with the counts in Table
\ref{tab:interior-edges-number-distribution-for-2ngons} to compute the overall
probability that a randomly selected two-component link is either
trivial, a Hopf link, or a Solomon's link.

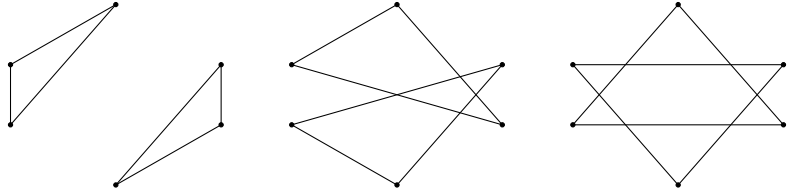
\begin{figure} \centering
\begin{tikzpicture}[scale=0.4]
	\draw[fill=black] (0,0) circle (2pt);
	\draw[fill=black] (-3.5,2) circle (2pt);
	\draw[fill=black] (3.5,2) circle (2pt);
	\draw[fill=black] (-3.5,4) circle (2pt);
	\draw[fill=black] (3.5,4) circle (2pt);
	\draw[fill=black] (0,6) circle (2pt);

	\draw (0,0)--(3.5,2);
	\draw (0,0)--(3.5,4);
	\draw (3.5,2)--(3.5,4);
	\draw (0,6)--(-3.5,2);
	\draw (0,6)--(-3.5,4);
	\draw (-3.5,2)--(-3.5,4);
\end{tikzpicture}
\hspace{0.25in}
\begin{tikzpicture}[scale=0.4]
	\draw[fill=black] (0,0) circle (2pt);
	\draw[fill=black] (-3.5,2) circle (2pt);
	\draw[fill=black] (3.5,2) circle (2pt);
	\draw[fill=black] (-3.5,4) circle (2pt);
	\draw[fill=black] (3.5,4) circle (2pt);
	\draw[fill=black] (0,6) circle (2pt);

	\draw (0,0)--(-3.5,2);
	\draw (0,0)--(3.5,4);
	\draw (-3.5,2)--(3.5,4);
	\draw (0,6)--(3.5,2);
	\draw (0,6)--(-3.5,4);
	\draw (3.5,2)--(-3.5,4);
\end{tikzpicture}
\hspace{0.25in}
\begin{tikzpicture}[scale=0.4]
	\draw[fill=black] (0,0) circle (2pt);
	\draw[fill=black] (-3.5,2) circle (2pt);
	\draw[fill=black] (3.5,2) circle (2pt);
	\draw[fill=black] (-3.5,4) circle (2pt);
	\draw[fill=black] (3.5,4) circle (2pt);
	\draw[fill=black] (0,6) circle (2pt);

	\draw (0,0)--(-3.5,4);
	\draw (0,0)--(3.5,4);
	\draw (-3.5,4)--(3.5,4);
	\draw (0,6)--(-3.5,2);
	\draw (0,6)--(3.5,2);
	\draw (-3.5,2)--(3.5,2);
\end{tikzpicture}
\caption{Projections of two $3$-cycles in $K_6$ with $i=1$ (left), $i=2$
(middle), and $i=3$ (right) interior edges.}
\label{fig:k6interior} \end{figure}

When $i=1$, it is evident that since the projection of the two cycles has
no crossings, then the two-component link is trivial.

When $i=2$, Table \ref{tab:Eulerianlinkingnumberdistributionbycrossing}
implies that the probability that the two cycles are the Hopf link is
$p_2=\frac{1}{3}$, and the probability that the two cycles are the trivial
link is $1-p_2=\frac{2}{3}$.

When $i=3$, Table \ref{tab:Eulerianlinkingnumberdistributionbycrossing}
implies that the probability that the two cycles form the Solomon's link is
$q_3=\frac{1}{60}$, the probability that the two cycles form the Hopf link is
$p_3 =\frac{13}{30}$, and the probability that the two cycles form the trivial
link is $1-p_3-q_3 = \frac{11}{20}$.

Table \ref{tab:interior-edges-number-distribution-for-2ngons} details the
frequency with each the 10 cycles in $K_6$ have $1$, $2$, or $3$ interior
edges. From this, we determine that the probability that a randomly chosen
pair of disjoint $3$-cycles in a random book embedding of $K_6$ is trivial is
\begin{equation*}
	\frac{1}{10} \left( 3\cdot 1 + 6\cdot \frac{2}{3}+1 \cdot
		\frac{11}{20}\right) = \frac{151}{200}.
\end{equation*}
Similarly, the probability that a randomly chosen pair of disjoint $3$-cycles
in a random book embedding of $K_6$ is the Hopf link is
\begin{equation*}
	\frac{1}{10} \left( 3 \cdot 0 + 6 \cdot \frac{1}{3} + 1 \cdot
		\frac{13}{30}\right) = \frac{73}{300}.
\end{equation*}
Finally, the probability that a randomly chosen pair of disjoint $3$-cycles
in a random book embedding of $K_6$ is the Solomon's link is
\begin{equation*}
	\frac{1}{10} \left( 3 \cdot 0 + 6 \cdot 0 + 1 \cdot
		\frac{1}{60}\right) = \frac{1}{600}.
\end{equation*}

Since $K_6$ contains 10 distinct disjoint pairs of $3$-cycles, this implies
that in a random book embedding of $K_6$, the expected number of trivial links
is $\frac{151}{20}$, the expected number of Hopf links is $\frac{73}{30}$,
and the expected number of Solomon's links is $\frac{1}{60}$. It is a
classical result in spatial graph theory that every embedding of $K_6$
contains at least one non-trivial link \cite{ConwayGordon}. In a random book
embedding of $K_6$, the expected number of non-trivial links is
$\frac{49}{20}$, with nearly all of the non-trivial links represented by
Hopf links.

\section{Acknowledgments}

The authors would like to thank the
National Science Foundation for supporting this work. This research was
partially supported by National Science Foundation Grant DMS-1852132.

In addition, the authors would like to thank the Department of Mathematics at
Rose-Hulman Institute of
Technology for their hospitality and for hosting the Rose-Hulman Institute of
Technology Mathematics Research Experience for Undergraduates, where most of
this work was completed.

\bibliographystyle{plain}
\bibliography{bibliography}

\begin{thebibliography}{10}

\bibitem{Arsuaga09}
J.~Arsuaga, B.~Borgo, Y.~Diao, and R.~Scharein.
\newblock The growth of the mean average crossing number of equilateral
  polygons in confinement.
\newblock {\em J. Phys. A}, 42(46):465202, 9, 2009.

\bibitem{Arsuaga}
Javier Arsuaga, T~Blackstone, Yuanan Diao, E~Karadayi, and M~Saito.
\newblock Linking of uniform random polygons in confined spaces.
\newblock {\em Journal of Physics A: Mathematical and Theoretical}, 40:1925, 02
  2007.

\bibitem{atneosen72}
Gail Atneosen.
\newblock One-dimensional n-leaved continua.
\newblock {\em Fundamenta Mathematicae}, 74:43--45, 1972.

\bibitem{comtet}
Louis Comtet.
\newblock {\em Advanced Combinatorics: The Art of Finite and Infinite
  Expansions}, chapter~6.
\newblock D. Reidel Publishing Company, Boston, USA, revised and enlarged
  edition, 1974.

\bibitem{ConwayGordon}
J.~H. Conway and C.~McA. Gordon.
\newblock Knots and links in spatial graphs.
\newblock {\em J. Graph Theory}, 7(4):445--453, 1983.

\bibitem{Diao}
Y.~Diao, C.~Ernst, S.~Saarinen, and U.~Ziegler.
\newblock Generating random walks and polygons with stiffness in confinement.
\newblock {\em J. Phys. A}, 48(9):095202, 19, 2015.

\bibitem{Diao93}
Y.~Diao, N.~Pippenger, and D.W. Sumners.
\newblock On random knots.
\newblock In {\em Random knotting and linking ({V}ancouver, {BC}, 1993)},
  volume~7 of {\em Ser. Knots Everything}, pages 187--197. World Sci. Publ.,
  River Edge, NJ, 1994.

\bibitem{endo94}
Toshiki Endo and Takashi Otsuki.
\newblock Notes on spatial representations of graphs.
\newblock {\em Hokkaido Math. J.}, 23(3):383--398, 1994.

\bibitem{endo96}
Toshiki Endo and Takashi Otsuki.
\newblock Knots and links in certain spatial complete graphs.
\newblock {\em J. Comb. Theory. Ser. B}, 68:23--35, 1996.

\bibitem{euler00}
Leonhard Euler.
\newblock {\em Foundations of Differential Calculus}.
\newblock Springer Science and Business Media, 2000.

\bibitem{Even_Zohar_2016}
Chaim Even-Zohar, Joel Hass, Nati Linial, and Tahl Nowik.
\newblock Invariants of random knots and links.
\newblock {\em Discrete \& Computational Geometry}, 56(2):274–314, Jun 2016.

\bibitem{Flapan}
Erica Flapan.
\newblock {\em Knots, Molecules and the Universe: An Introduction to Topology}.
\newblock American Mathematical Society, United States of America, 2016.

\bibitem{flapan16}
Erica Flapan and Kenji Kozai.
\newblock Linking number and writhe in random linear embeddings of graphs.
\newblock {\em J. Math. Chem.}, 54(5):1117--1133, 2016.

\bibitem{Beals}
S.~Harrell M.~Beals, L.~Gross.
\newblock Dna and knot theory.
\newblock TIEM, 1999, {Online}.

\bibitem{Mishra}
R.~Mishra and S.~Bhushan.
\newblock Knot theory in understanding proteins.
\newblock {\em J. Math. Biol.}, 65:1187--1213, 2012.

\bibitem{a008292}
{OEIS Foundation Inc.}
\newblock A008292 - {OEIS}.
\newblock https://oeis.org/A008292.
\newblock Accessed: 2022-10-18.

\bibitem{a103371}
{OEIS Foundation Inc.}
\newblock A103371 - {OEIS}.
\newblock https://oeis.org/A103371.
\newblock Accessed: 2022-10-18.

\bibitem{panagiotou10}
E.~Panagiotou, K.~C. Millett, and S.~Lambropoulou.
\newblock The linking number and the writhe of uniform random walks and
  polygons in confined spaces.
\newblock {\em J. Phys. A}, 43(4):045208, 28, 2010.

\bibitem{persinger66}
C.A. Persinger.
\newblock Subsets of n-books in ${E}^3$.
\newblock {\em Pac. J. Math.}, 18(1):169--173, 1966.

\bibitem{Portillo}
J.~Portillo, Y.~Diao, R.~Scharein, J.~Arsuaga, and M.~Vazquez.
\newblock On the mean and variance of the writhe of random polygons.
\newblock {\em J. Phys. A}, 44(27):275004, 19, 2011.

\bibitem{Rowland}
D.~Rowland.
\newblock Classification of book representations of ${K}_6$.
\newblock {\em Journal of Knot Theory and Its Ramifications}, 26(12):1--26,
  2017.

\bibitem{Numerical}
K.~Tsurusaki and T.~Deguchi.
\newblock Numerical analysis on topological entanglements of random polygons.
\newblock In {\em Statistical models, {Y}ang-{B}axter equation and related
  topics, and {S}ymmetry, statistical mechanical models and applications
  ({T}ianjin, 1995)}, pages 320--329. World Sci. Publ., River Edge, NJ, 1996.

\end{thebibliography}

\end{document}